\theoremstyle{plain}
\newtheorem{theorem}{Theorem}[section]
\newtheorem{lemma}[theorem]{Lemma}
\newtheorem{proposition}[theorem]{Proposition}
\newtheorem{corollary}[theorem]{Corollary}
\theoremstyle{definition}
\newtheorem{definition}[theorem]{Definition}
\theoremstyle{remark}
\newtheorem{remark}[theorem]{Remark}
\newcommand{\R}{\mathbb{R}}
\newcommand{\norm}[1]{\left\lVert #1 \right\rVert}
\newcommand{\ip}[2]{\left\langle #1,#2 \right\rangle}
\newcommand{\grad}{\nabla}
\newcommand{\cG}{\mathcal{G}}
\newcommand{\cS}{\mathcal{S}}
\newcommand{\cP}{\mathcal{P}}
\title{\Large \bf Complexity Bounds for Smooth Multiobjective Optimization}
\author{
 Phillipe R. Sampaio\thanks{Email:
 \texttt{phillipe.rodriguessampaio@bnpparibas.com}} \\
  BNP Paribas Cardif\\
  Nanterre, France \\
}
\begin{document}
\setlength\parindent{15pt}
\maketitle

\begin{abstract}
We study the oracle complexity of finding $\varepsilon$-Pareto stationary points in smooth multiobjective optimization with $m$ objectives. Progress is measured by the Pareto stationarity gap $\cG(x)$, the norm of the best convex combination of objective gradients. Our analysis relies on a non-degenerate lifting that embeds hard single-objective instances into MOO instances with distinct objectives and non-singleton Pareto fronts while preserving lower bounds on $\cG$. We establish: (i) in the $\mu$-strongly convex case, any span first-order method has worst-case linear convergence no faster than $\exp(-\Theta(T/\sqrt{\kappa}))$ after $T$ oracle calls, yielding $\Theta(\sqrt{\kappa}\log(1/\varepsilon))$ iterations and matching accelerated upper bounds; (ii) in the convex case, an $\Omega(1/T)$ min-iterate lower bound for oblivious one-step methods and a universal last-iterate lower bound $\Omega(1/T^2)$ for oblivious span methods via polynomial-degree arguments, and we further show this latter bound is loose (for general adaptive methods) by importing geometric lower bounds to obtain an $\Omega(1/T)$ min-iterate lower bound for general adaptive first-order methods; (iii) in the nonconvex case with $L$-Lipschitz gradients, an $\Omega(\sqrt{L}/(T+1))$-type lower bound on $\cG$ (tight in order), implying $\Omega(1/\varepsilon^2)$ iterations to reach $\cG(x)\le\varepsilon$ up to natural scaling. 
\end{abstract}

\keywords{Multiobjective optimization; Oracle complexity; Pareto stationarity; First-order span methods; Adaptive scalarization}

\section{Introduction}
Multiobjective optimization (MOO) provides a mathematical framework for decision-making problems involving multiple, often conflicting, performance criteria. Given a vector-valued objective function {\scriptsize $F(x) = (f_1(x), \dots, f_m(x))$}, the goal is to find a point $x$ that offers the best possible trade-off among the individual objectives $f_i$. Since a single point that simultaneously minimizes all objectives typically does not exist, the central solution concept is that of \emph{Pareto optimality}. A point is Pareto optimal if no other point can improve one objective without degrading at least one other.

First-order iterative methods have become the tool of choice for solving large-scale MOO problems, particularly in machine learning \cite{SenerKoltun18}, and they are increasingly deployed in engineering and finance — for example, distributed-gradient schemes for multi-objective AC optimal power flow \cite{MoghadamSuratgarHesamzadehNikravesh2022}, adjoint-based gradient methods for reservoir well-control under risk and uncertainty \cite{LiuReynolds2016SPE}, and gradient-descent formulations for multi-objective portfolio selection \cite{OlivaVenturaLagoFernandez2025}. These methods rely on an oracle that returns function values and gradients. A key challenge is to define a tractable computational goal. While computing an exact Pareto-optimal point is often impractical, a widely adopted necessary condition is \emph{Pareto criticality} \cite{FliegeSvaiter00}, which is equivalent to the \emph{Pareto stationarity} notion adopted here. A point $x$ is Pareto stationary if the convex hull of its gradients contains the origin. The proximity to this condition is measured by the Pareto stationarity gap $\cG(x)$ (see Definition~\ref{def:pareto-gap} in Section~\ref{sec:prelim}). Finding a point $x$ with $\mathcal{G}(x) \le \varepsilon$ is a standard objective for many contemporary MOO algorithms \cite{Desideri12}.

Although numerous algorithms have been proposed with convergence-rate guarantees (upper bounds), the fundamental limits of what is achievable have remained largely unexplored. Without \emph{oracle lower bounds}, we cannot know if existing algorithms are optimal. In single-objective optimization, this story is complete: the seminal work of Nemirovski and Yudin \cite{NY83} established lower bounds that were later matched by accelerated methods \cite{Nesterov04}, proving their optimality. For MOO, however, a corresponding theory of oracle complexity has been absent.

\paragraph{Why scalarization does not trivialize MOO}
It may appear that multiobjective complexity reduces immediately to the single-objective case by fixing a scalarization $f_\lambda=\sum_i \lambda_i f_i$ and applying known convergence results. Indeed, all classical rates for smooth convex or strongly convex minimization (e.g., gradient descent $O(LR/\sqrt{T})$, AGD $O(LR/T)$, and linear convergence under strong convexity) apply verbatim to any fixed scalarization. However, the multiobjective goal is different: the central complexity measure is the Pareto stationarity gap $\cG(x)$ which requires considering all convex combinations of gradients simultaneously. A point minimizing a fixed $f_\lambda$ need not minimize $\mathcal{G}$, and the algorithm classes we study (oblivious one-step vs.\ oblivious span vs.\ general adaptive first-order methods) differ in their ability to exploit information across objectives. Consequently, while scalarization provides useful upper bounds, the oracle complexity of Pareto stationarity requires a dedicated analysis, which is the focus of this paper.

\paragraph{Contributions} This paper closes this fundamental gap by providing a nuanced information-based complexity analysis that distinguishes between different classes of first-order algorithms. Our main contributions are as follows:
\begin{itemize}[leftmargin=1.5em,itemsep=0.35em]
    \item \textbf{Robust non-degenerate lifting.} We show our lower bounds are not artifacts of degenerate problems. We construct MOO instances with distinct objectives and non-singleton Pareto fronts that preserve lower bounds on $\cG$ from embedded scalar instances.

    \item \textbf{Tight bounds for strongly convex MOO:} We establish a tight linear convergence lower bound. Any span first-order method can be forced to have a stationarity gap of at least $\mu R \left(\frac{\sqrt{\kappa}-1}{\sqrt{\kappa}+1}\right)^T$, implying an iteration complexity of $\Omega(\sqrt{\kappa} \log(\mu R/\varepsilon))$. This is matched by accelerated gradient methods on a fixed scalarization.

    \item \textbf{Lower bound for oblivious one-step convex MOO:} For the class of \emph{oblivious one-step gradient methods}—which includes gradient descent with a pre-scheduled step-size sequence—we prove an $\Omega(LR/(T+1))$ lower bound on the (min-iterate) stationarity gap. Accelerated gradient descent (AGD), which is not in this class but is an \emph{oblivious span} method, achieves an $\mathcal{O}(LR/(T+1))$ upper rate on a fixed scalarization. Thus the $\Omega(1/T)$ rate is matched by oblivious span methods, while within the oblivious one-step class the best known general upper bound is $\mathcal{O}(LR/\sqrt{T})$.

    \item \textbf{Polynomial-degree lower bound for oblivious span methods and geometric lower bound for adaptive methods:} Within the \emph{oblivious span} model, we establish a universal \emph{last-iterate} polynomial-degree lower bound $\Omega(1/T^2)$ via Markov-type extremal inequalities. For general deterministic adaptive first-order methods, we prove a stronger \emph{geometric} (zero-chain) $\Omega(1/T)$ \emph{min-iterate} lower bound under a bounded scalarization gap, obtained by lifting the stationary-point lower bounds of Carmon--Duchi--Hinder--Sidford \cite{CarmonDuchiHinderSidford2021}.

    \item \textbf{Nonconvex extension with Lipschitz gradients.} Using the same lifting on the stationary-point lower bounds of Carmon--Duchi--Hinder--Sidford \cite{CarmonDuchiHinderSidford2020}, we obtain a nonconvex MOO lower bound under $L$-Lipschitz gradients of $\Omega(\sqrt{L\Delta/(T+1)})$ (equivalently $\Omega(L\Delta/\varepsilon^2)$ iterations) to reach $\cG(x)\le\varepsilon$.
\end{itemize}
Our methodology combines classical polynomial-based lower-bound techniques in the spirit of the resisting-oracle framework \cite{NY83} with a non-degenerate lifting reduction that transfers sharp single-objective stationary-point lower bounds (e.g., \cite{CarmonDuchiHinderSidford2021}) to multiobjective Pareto stationarity. This yields a unified oracle-complexity perspective for smooth MOO and clarifies how algorithmic restrictions (oblivious one-step vs.\ oblivious span vs.\ general adaptive first-order methods) and iterate selection (min vs.\ last) shape the achievable rates.

\paragraph{Organization} Section~\ref{sec:related} reviews related work. Section~\ref{sec:prelim} fixes notation, defines $\cG$, and formalizes oracle models and algorithm classes. Section~\ref{sec:lower} presents the lifting construction, then derives strongly convex, convex, and nonconvex lower bounds by combining polynomial extremal arguments and geometric stationary-point lower bounds. Section~\ref{sec:upper} provides matching upper bounds via AGD on fixed scalarizations and compares rates against the lower bounds. Section~\ref{sec:conclusions} concludes with implications and open questions. The appendices contain the polynomial extremal derivations (Chebyshev and Markov) used to analyze span and oblivious methods on hard quadratic instances.

\section{Related Work}\label{sec:related}

\paragraph{Oracle complexity and first-order limits}
The information-based complexity program for single-objective convex optimization established sharp first-order lower bounds together with matching optimal methods \cite{NY83,Nesterov04}. In \cite{NY83}, these lower bounds are derived by reducing the worst-case error to an extremal polynomial problem, solved exactly by (scaled) Chebyshev polynomials; see \cite{Rivlin74} for the underlying Chebyshev minimax and related inequalities. Recent analyses include Performance Estimation Problem (PEP) frameworks that deliver tight worst-case guarantees for many first-order methods \cite{DroriTeboulle14}. Beyond the classical oracle model, a line of work formalizes oblivious and structured first-order algorithms: Arjevani and Shamir \cite{ArjevaniShamir16ICML} introduced the oblivious framework with dimension-free lower bounds for $L$-smooth convex and $L$-smooth $\mu$-strongly convex problems; Arjevani, Shalev-Shwartz, and Shamir \cite{ArjevaniShalevOhad16JMLR} developed the $p$-SCLI framework (covering $p{=}1$ one-step methods) with matching bounds; and Arjevani and Shamir \cite{ArjevaniShamir16NeurIPS} extended dimension-free iteration lower bounds to finite-sum settings, covering variance-reduced families. Complementary results for randomized first-order algorithms appear in Woodworth and Srebro \cite{WoodworthSrebro17}. More recently, Carmon, Duchi, Hinder, and Sidford \cite{CarmonDuchiHinderSidford2021} established sharp lower bounds for finding first-order stationary points in smooth convex and nonconvex optimization, clarifying the fundamental $1/T$ (convex) and $1/\sqrt{T}$ (nonconvex) barriers in gradient-norm complexity under standard first-order oracles. A comprehensive and recent treatment of finite-sum lower bounds is given by Han, Xie, and Zhang \cite{HanXieZhang24}.

\paragraph{Multiobjective optimization algorithms}
Classical foundations and broad context are covered in the monograph by Miettinen \cite{Miettinen1998} and the survey of Marler and Arora \cite{MarlerArora2004Survey}. For descent-type methods, the steepest–descent family beginning with \cite{FliegeSvaiter00} and including cone-ordered variants and refined convergence analyses \cite{DrummondSvaiter2005,CocchiLiuzziLucidiSciandrone2020} provides global convergence to Pareto-stationary points. Worst-case complexity guarantees of order $\mathcal{O}(\varepsilon^{-2})$ for trust-region and regularization frameworks in unconstrained MOO were established in \cite{GrapigliaYuanYuan2015}. Complexity aspects for gradient descent in smooth MOO are analyzed in \cite{FliegeVazVicente2019}. Universal nonmonotone line-search frameworks for non-convex MOO with convex constraints have also been proposed \cite{PinheiroGrapiglia2025Universal}. When progress is tracked through standard merit functions or stationarity surrogates, the resulting upper bounds are typically sublinear (e.g., $\mathcal{O}(1/k)$) in smooth convex settings \cite{TanabeFukudaYamashita2023OptLett, GoncalvesGoncalvesMelo2025FWRates}. Second-order frameworks — Newton-type and SQP — offer globalization with fast local behavior: globally convergent Newton methods for MOO with fast (often superlinear) local rates under standard regularity are developed in \cite{GoncalvesLimaPrudente2022}, while SQP-based schemes for constrained problems are given in \cite{FliegeVaz2016MOSQP}. Conditional-gradient (Frank--Wolfe) methods have been developed for vector/multiobjective optimization with global convergence to weakly efficient (Pareto-stationary) solutions under cone-convexity assumptions \cite{ChenYangZhao2023}. Away-step and related variants can accelerate convergence and, under additional geometric or curvature assumptions, deliver faster (sometimes linear) rates \cite{GoncalvesGoncalvesMelo2024AwayFW,GoncalvesGoncalvesMelo2025FWRates}, with adaptive generalized conditional-gradient methods extending this line \cite{GebrieFukuda2025}. Accelerated first-order schemes tailored to the multiobjective setting continue to mature: an accelerated proximal-gradient method with $\mathcal{O}(1/k^2)$ decrease for an appropriate merit measure is established in \cite{TanabeFukudaYamashita23}, and recent work clarifies monotonicity properties of multiobjective APG \cite{NishimuraFukudaYamashita2024}. Complementary practical gradient-based schemes and stationarity criteria, such as multiple-gradient descent, are surveyed in \cite{Desideri12}, and links to multi-task learning further motivate stationarity-based goals and algorithm design \cite{SenerKoltun18}.

\paragraph{Our position}
While scalarization-based upper bounds are classical, a systematic oracle-complexity theory for \emph{Pareto stationarity} that (i) distinguishes oblivious one-step from both oblivious span methods and general adaptive first-order methods, (ii) provides robust non-degenerate hard MOO instances, and (iii) connects polynomial-degree lower bounds to geometric stationary-point lower bounds, has been missing. We supply such a framework and import sharp scalar stationary-point lower bounds into MOO via an explicit lifting reduction.

\section{Preliminaries}\label{sec:prelim}

\subsection{Notation}
All vectors lie in $\mathbb{R}^d$ and matrices in $\mathbb{R}^{d\times d}$. 
The Euclidean inner product and norm are $\langle x,y\rangle$ and $\|x\|$; for a matrix $A$, 
$\|A\|_{\mathrm{op}}$ is the operator norm, $\sigma(A)$ its spectrum, $A\succeq 0$ (resp.\ $A\succ 0$) means positive (semi)definite, and $I$ is the identity; $(\cdot)^\top$ denotes transpose.
For a set $S$, $\operatorname{conv}(S)$ is its convex hull, $\operatorname{span}(S)$ its linear span, and $\operatorname{dist}(x,S):=\inf_{y\in S}\|x-y\|$ its distance. 
For a matrix $A$, $\operatorname{range}(A)$ and $\ker(A)$ denote its range and kernel.
We write $\mathbb{N}_0:=\{0,1,2,\ldots\}$ and $\mathbb{N}:=\{1,2,\ldots\}$.

For $m\ge2$ objectives $F(x)=(f_1(x),\dots,f_m(x))$, 
the unit simplex is 
\[
\Delta^m:=\{\lambda\in\R^m:\ \lambda_i\ge0,\ \sum_{i=1}^m\lambda_i=1\}.
\]
For $\lambda\in\Delta^m$, the scalarization is $f_\lambda(x):=\sum_{i=1}^m \lambda_i f_i(x)$ and $x^\star_\lambda\in\arg\min_x f_\lambda(x)$ denotes an arbitrary minimizer.

In convex settings we use the scalarization value gap at initialization
\[
\Delta_\lambda(x^{(0)}):= f_\lambda(x^{(0)})-\inf_{x} f_\lambda(x).
\]

When the Pareto set $\cP$ is well-defined (convex case; see Proposition~\ref{prop:convex-iff}), we write
\[
R:=\mathrm{dist}(x^{(0)},\cP).
\]
In the nonconvex extension we will instead refer to the set of Pareto-stationary points $\cS:=\{x:\cG(x)=0\}$ when needed.

Iterates are $x^{(t)}$ for $t\in\mathbb{N}_0$. $T\in\mathbb{N}$ is the number of oracle calls/iterations.

The condition number is $\kappa:=L/\mu$ when strong convexity applies. We occasionally write $\rho:=\frac{\sqrt{\kappa}+1}{\sqrt{\kappa}-1}$ in spectral formulas.

For quadratics $g(x)=\tfrac12 x^\top H x-b^\top x$, we use the spectral variable $\zeta$ and real polynomials $p_t$ (with $\deg p_t\le t$, $p_t(0)=1$) to express errors via $x^{(t)}-x^\star=p_t(H)\bigl(x^{(0)}-x^\star\bigr)$. Chebyshev polynomials of the first kind are denoted $T_t(\cdot)$.

Standard Landau symbols $O(\cdot),\ \Omega(\cdot),\ \Theta(\cdot)$ are used; $\log$ denotes the natural logarithm.

For nonnegative quantities $A$ and $B$, we write $A \asymp B$ (equivalence up to universal constants) if there exist absolute constants $c_1,c_2>0$ (independent of problem parameters such as $L,\Delta,R,T,m,d$) such that $c_1 B \le A \le c_2 B$.
We use this notation only when the implicit constants are universal; otherwise we state explicitly which parameters they may depend on.

The vectors $(e_i)_{i=1}^m$ are the canonical basis of $\mathbb{R}^m$.

We consider unconstrained optimization problems in $\R^d$. A differentiable function $f: \R^d \to \R$ is \textbf{$L$-smooth} if its gradient is $L$-Lipschitz continuous. It is \textbf{$\mu$-strongly convex} if $f(y) \ge f(x) + \ip{\nabla f(x)}{y-x} + \frac{\mu}{2}\norm{y-x}^2$ for all $x,y$.

\subsection{Problem formulation}\label{subsec:smooth-convex-prob}

We consider unconstrained multiobjective problems
\begin{equation}\label{eq:main-prob}
\min_{x\in\R^d} F(x) := (f_1(x),\ldots,f_m(x)).
\end{equation}
where each $f_i: \R^d \to \R$ is continuously differentiable. Convexity and strong convexity will be imposed in the relevant sections.

\subsection{Pareto optimality and first-order conditions}\label{subsec:pareto-optimality}

We now review the standard optimality notions for (possibly non-convex) MOO problems (see \cite{Miettinen1998}) and clarify their relationship with the Pareto stationarity gap used throughout the paper. 

\smallskip
\begin{definition}[Dominance]
Given $x,y\in\R^d$, we say that $y$ \emph{dominates} $x$ if
\[
f_i(y)\le f_i(x)\ \ \text{for all } i=1,\dots,m,\quad \text{and}\quad
f_j(y)< f_j(x)\ \ \text{for at least one } j.
\]
\end{definition}

\smallskip
Based on this, a point is considered optimal if no other point in the domain dominates it.

\smallskip
\begin{definition}[Pareto optimality (strong)]
A point $x^\star\in\R^d$ is \emph{Pareto optimal} if there is no $y\in\R^d$ that dominates $x^\star$; equivalently, there is no $y$ with
\[
f_i(y)\le f_i(x^\star)\ \ \text{for all } i,\quad \text{and}\quad
f_j(y)< f_j(x^\star)\ \ \text{for some } j.
\]
\end{definition}

\smallskip
A related, slightly weaker condition is often useful for theoretical analysis and algorithm design:

\smallskip
\begin{definition}[Weak Pareto optimality]
A point $x^\star\in\R^d$ is \emph{weakly Pareto optimal} if there is no $y\in\R^d$ such that
\[
f_i(y)< f_i(x^\star)\ \ \text{for all } i=1,\dots,m.
\]
\end{definition}

\smallskip
To develop iterative, gradient-based methods, we need a computable, first-order necessary condition for optimality. This leads to the concept of Pareto criticality.

\smallskip
\begin{definition}[Pareto criticality]\label{def:pareto-criticality}
Following \cite{FliegeSvaiter00}, a point $x\in\R^d$ is called \emph{Pareto critical} if there is no common strict descent direction, i.e.,
\[
\not\exists\, v\in\R^d \quad\text{such that}\quad \nabla f_i(x)^\top v < 0 \quad \text{for all } i=1,\dots,m.
\]
Equivalently, with the Jacobian $JF(x)\in\R^{m\times d}$ whose $i$-th row is $\nabla f_i(x)^\top$, this reads
\[
\operatorname{range}(JF(x))\cap(-\R_{++})^m=\varnothing,
\]
where $\R_{++}$ is the set of (strictly) positive real numbers.
\end{definition}

\smallskip
This geometric condition is equivalent to the convex hull of the gradients containing the origin. The proximity to this state is quantified by the following gap function.

\smallskip
\begin{definition}[Pareto stationarity gap]\label{def:pareto-gap}
The Pareto stationarity gap at $x\in\R^d$ is
\[
\cG(x) \;:=\; \min_{\lambda\in\Delta^m} \Big\| \sum_{i=1}^m \lambda_i\,\nabla f_i(x) \Big\|,
\qquad
\Delta^m := \big\{\lambda\in\R^m:\ \lambda_i\ge 0,\ \sum_{i=1}^m \lambda_i=1\big\}.
\]
We call $x$ \emph{$\varepsilon$-Pareto stationary} if $\cG(x)\le \varepsilon$; the case $\varepsilon=0$ corresponds to \emph{Pareto stationarity}.
\end{definition}

\smallskip
\paragraph{Relation to Pareto merit functions}
Besides stationarity gaps, the MOO literature also uses merit (value-gap) functions. Assume that each $f_i$ is convex and differentiable. For instance, consider the Pareto merit function
\[
u_0(x)\;:=\;\sup_{z\in\mathbb{R}^d}\min_{i=1,\dots,m}\bigl(f_i(x)-f_i(z)\bigr),
\]
which vanishes at weakly Pareto optima and is positive otherwise (see, e.g., \cite{TanabeFukudaYamashita23}).

Let $f_\lambda(x):=\sum_{i=1}^m \lambda_i f_i(x)$ for $\lambda\in\Delta^m$, and assume
$\arg\min f_\lambda\neq\emptyset$ for all $\lambda\in\Delta^m$; pick any minimizer
$x_\lambda^\star\in\arg\min f_\lambda$ and write $f_\lambda^\star:=f_\lambda(x_\lambda^\star)$.
We can upper bound $u_0(x)$ by scalarization gaps in three simple steps.
Fix $x$ and $z$ and apply the identity
$\min_i a_i=\min_{\lambda\in\Delta^m}\sum_i \lambda_i a_i$ with $a_i=f_i(x)-f_i(z)$, to obtain
\[
\min_{i=1,\dots,m}\bigl(f_i(x)-f_i(z)\bigr)
=\min_{\lambda\in\Delta^m}\sum_{i=1}^m \lambda_i\bigl(f_i(x)-f_i(z)\bigr)
=\min_{\lambda\in\Delta^m}\bigl(f_\lambda(x)-f_\lambda(z)\bigr).
\]
Substituting into the definition of $u_0$ yields
\[
u_0(x)=\sup_{z\in\mathbb{R}^d}\min_{\lambda\in\Delta^m}\bigl(f_\lambda(x)-f_\lambda(z)\bigr).
\]
Next, we use the elementary min--max inequality 
\[
\sup_z\min_\lambda \Phi(z,\lambda)\le \min_\lambda\sup_z \Phi(z,\lambda)
\]
with $\Phi(z,\lambda)=f_\lambda(x)-f_\lambda(z)$ to obtain
\[
u_0(x)\;\le\;\min_{\lambda\in\Delta^m}\sup_{z\in\mathbb{R}^d}\bigl(f_\lambda(x)-f_\lambda(z)\bigr).
\]
Finally, for each fixed $\lambda$, since $f_\lambda(x)$ does not depend on $z$,
\[
\sup_{z\in\mathbb{R}^d}\bigl(f_\lambda(x)-f_\lambda(z)\bigr)
=f_\lambda(x)-\inf_{z\in\mathbb{R}^d} f_\lambda(z)
=f_\lambda(x)-f_\lambda^\star,
\]
and therefore
\[
u_0(x)\;\le\;\min_{\lambda\in\Delta^m}\bigl(f_\lambda(x)-f_\lambda^\star\bigr).
\]

By convexity, for each $\lambda$,
\[
f_\lambda(x)-f_\lambda^\star
\le \langle \nabla f_\lambda(x),\,x-x_\lambda^\star\rangle
\le \|\nabla f_\lambda(x)\|\,\|x-x_\lambda^\star\|.
\]
Therefore,
\[
u_0(x)\;\le\;\bar R(x)\,\mathcal G(x),
\qquad
\bar R(x):=\sup_{\lambda\in\Delta^m}\|x-x_\lambda^\star\|.
\]
Moreover, if each $f_i$ is $\mu$-strongly convex, then so is every $f_\lambda$,
and the Polyak--\L ojasiewicz inequality yields
$f_\lambda(x)-f_\lambda^\star\le \frac{1}{2\mu}\|\nabla f_\lambda(x)\|^2$; hence
\[
u_0(x)\;\le\;\frac{1}{2\mu}\,\mathcal G(x)^2.
\]
In particular, under a boundedness condition ensuring $\bar R(x)<\infty$ on the region of interest (or under strong convexity),
any convergence guarantee stated in terms of the stationarity gap $\mathcal G(x)$ immediately implies a corresponding bound
for the merit function $u_0(x)$.

\smallskip
\paragraph{Equivalence of stationarity notions}
The next lemma makes precise that \emph{Pareto criticality} is equivalent to \emph{Pareto stationarity} $\cG(x)=0$.

\smallskip
\begin{lemma}[No common descent $\Longleftrightarrow$ convex-hull stationarity]\label{lem:equiv-stationarity}
For $C^1$ objectives $(f_i)$, the following are equivalent at a point $x$:
\begin{enumerate}[label=(\alph*),leftmargin=2em]
\item \textup{(Pareto criticality)} There is no $v\in\R^d$ with $\nabla f_i(x)^\top v<0$ for all $i$.
\item \textup{(Convex-hull stationarity)} There exists $\lambda\in\Delta^m$ such that 
\[
\sum_{i=1}^m \lambda_i\,\nabla f_i(x)=0
\]
(equivalently, $\cG(x)=0$).
\end{enumerate}
\end{lemma}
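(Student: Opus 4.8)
The plan is to recognize this equivalence as the classical Gordan alternative applied to the Jacobian $A := JF(x)\in\R^{m\times d}$, and to give a short self-contained argument through the separating hyperplane theorem rather than quoting a theorem of the alternative as a black box.

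I would first dispatch the direction (b) $\Rightarrow$ (a), which needs no separation argument. If $\lambda\in\Delta^m$ satisfies $\sum_{i=1}^m\lambda_i\,\nabla f_i(x)=0$, then for every $v\in\R^d$ we have $\sum_{i=1}^m\lambda_i\,\langle\nabla f_i(x),v\rangle=\langle 0,v\rangle=0$. Since the weights $\lambda_i$ are nonnegative and sum to one (so at least one is positive), they cannot simultaneously multiply strictly negative numbers and still yield $0$; hence $\nabla f_i(x)^\top v<0$ cannot hold for all $i$. Thus no common strict descent direction exists, i.e.\ (a) holds.

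For (a) $\Rightarrow$ (b) I would argue by contraposition. Suppose $0\notin C:=\operatorname{conv}\{\nabla f_1(x),\dots,\nabla f_m(x)\}$. Because $C$ is the convex hull of finitely many points, it is compact and convex, so the \emph{strict} separating hyperplane theorem provides $v\in\R^d$ and $\beta<0$ with $\langle g,v\rangle\le\beta<0$ for all $g\in C$; in particular $\nabla f_i(x)^\top v<0$ for every $i$, so $v$ is a common strict descent direction and (a) fails. Combining the two implications gives (a) $\Leftrightarrow$ (b). The parenthetical identification of (b) with $\cG(x)=0$ is then immediate: $\lambda\mapsto\bigl\lVert\sum_{i=1}^m\lambda_i\,\nabla f_i(x)\bigr\rVert$ is continuous on the compact set $\Delta^m$, so the infimum defining $\cG(x)$ is attained, and it vanishes precisely when some convex combination of the gradients is zero.

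I do not expect a genuine obstacle here; the two points that merit care are (i) invoking \emph{strict} rather than weak separation, which is licensed by compactness of the finite convex hull $C$, and (ii) if one instead prefers the Gordan form ``$\exists\,\lambda\ge 0,\ \lambda\ne 0:\ A^\top\lambda=0$'', rescaling the nonzero multiplier by $1/\sum_i\lambda_i$ to land in $\Delta^m$. I would also note that this lemma is exactly what licenses using $\cG(\cdot)$ as a computable surrogate for Pareto criticality throughout the paper, so it is worth stating cleanly even though the proof is standard.
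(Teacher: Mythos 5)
Your proof is correct. The direction (b) $\Rightarrow$ (a) is essentially the same argument as in the paper: a convex combination with nonnegative weights summing to one cannot annihilate a vector of strictly negative inner products. Where you differ is in (a) $\Rightarrow$ (b): the paper simply invokes Gordan's theorem of the alternative for $A=JF(x)$ (exactly one of $Av<0$ or $A^\top y=0$, $y\ge 0$, $y\neq 0$ holds, followed by normalization to $\Delta^m$), whereas you prove that direction from scratch by contraposition, strictly separating the origin from the compact convex hull $C=\operatorname{conv}\{\nabla f_i(x)\}$ of the finitely many gradients, which yields a common strict descent direction $v$ when $0\notin C$. This is in effect an inlined proof of the relevant half of Gordan's theorem, so the two routes are mathematically equivalent; yours buys self-containedness (no theorem of the alternative quoted as a black box, only the separating hyperplane theorem, or even just projection onto $C$), while the paper's citation keeps the proof to two lines. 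Your additional remark that the attainment of the minimum in $\cG(x)$ follows from continuity on the compact simplex is a small point the paper leaves implicit, and it is correct.
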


\begin{proof}
Apply Gordan’s theorem of the alternative to $A=JF(x)$: exactly one of $Av<0$ or $A^\top y=0$ with $y\ge 0$, $y\neq 0$ holds. The negation of the former yields a nonzero $y\ge 0$ with $JF(x)^\top y=0$; normalizing $y$ to the simplex gives (b). Conversely, (b) rules out any $v$ with $JF(x)v<0$ because $\sum_i \lambda_i\,\nabla f_i(x)^\top v$ would be a convex combination of strictly negative numbers.
\end{proof}

\smallskip
\paragraph{Necessary and sufficient conditions}
We now record the basic relationships between optimality and stationarity in the unconstrained setting.

\smallskip
\begin{proposition}[General $C^1$ case: necessity]\label{prop:necessary-general}
If $x^\star$ is weakly Pareto optimal (for not-necessarily convex $C^1$ objectives), then $x^\star$ is Pareto stationary; equivalently $\cG(x^\star)=0$.
\end{proposition}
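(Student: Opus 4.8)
The plan is to argue by contraposition, converting a failure of Pareto stationarity into a common strict-descent direction and then using first-order Taylor expansion to contradict weak Pareto optimality. Concretely, suppose $\cG(x^\star)>0$. By Lemma~\ref{lem:equiv-stationarity}, $\cG(x^\star)=0$ is equivalent to Pareto criticality, so $\cG(x^\star)>0$ means $x^\star$ is \emph{not} Pareto critical; hence there exists $v\in\R^d$ with $\nabla f_i(x^\star)^\top v<0$ for every $i=1,\dots,m$.

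Next I would exploit $C^1$ smoothness along the ray $t\mapsto x^\star+tv$. For each fixed $i$, differentiability at $x^\star$ gives
\[
f_i(x^\star+tv)=f_i(x^\star)+t\,\nabla f_i(x^\star)^\top v+o(t)\qquad(t\to 0^+),
\]
and since $\nabla f_i(x^\star)^\top v<0$, there is $t_i>0$ such that $f_i(x^\star+tv)<f_i(x^\star)$ for all $t\in(0,t_i)$. Because there are only $m<\infty$ objectives, setting $\bar t:=\min_{1\le i\le m}t_i>0$ yields a single $\bar t>0$ with $f_i(x^\star+\bar t\,v/2)<f_i(x^\star)$ simultaneously for all $i$ (any $t\in(0,\bar t)$ works). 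This exhibits a point $y:=x^\star+tv$ with $f_i(y)<f_i(x^\star)$ for all $i$, contradicting the definition of weak Pareto optimality. Therefore $\cG(x^\star)=0$, i.e., $x^\star$ is Pareto stationary.

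There is no real obstacle here: the only point requiring (minor) care is the passage from the per-objective thresholds $t_i$ to a common threshold, which is immediate since $m$ is finite; no convexity is used anywhere, consistent with the ``general $C^1$ case'' statement. One could equivalently phrase the Taylor step via the one-sided directional derivative $\tfrac{d}{dt}\big|_{t=0^+}f_i(x^\star+tv)=\nabla f_i(x^\star)^\top v<0$, but the $o(t)$ expansion above already makes the argument self-contained.
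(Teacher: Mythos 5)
Your proof is correct and follows essentially the same route as the paper's: both hinge on Lemma~\ref{lem:equiv-stationarity} to pass between Pareto criticality and $\cG(x^\star)=0$, with the descent-direction step (which the paper states tersely as ``there is no direction that strictly decreases all objectives'') being exactly the first-order Taylor argument you spell out. Your contrapositive phrasing and the explicit common threshold $\bar t=\min_i t_i$ simply make explicit what the paper leaves implicit; no gap.
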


\begin{proof}
At a weakly Pareto optimal point there is no direction that strictly decreases all objectives. By Lemma~\ref{lem:equiv-stationarity}, this is equivalent to the existence of $\lambda\in\Delta^m$ with $\sum_i \lambda_i\,\nabla f_i(x^\star)=0$, hence $\cG(x^\star)=0$.
\end{proof}

\begin{proposition}[Convex $C^1$ case: characterization of weak Pareto optima]\label{prop:convex-iff}
Suppose each $f_i$ is convex and $C^1$. Then, for $x^\star\in\R^d$, the following are equivalent:
\begin{enumerate}[label=(\roman*),leftmargin=2em]
\item $x^\star$ is weakly Pareto optimal;
\item $x^\star$ is Pareto stationary, i.e., $\cG(x^\star)=0$;
\item there exists $\lambda\in\Delta^m$ such that $x^\star\in\arg\min_x f_\lambda(x)$ with $f_\lambda:=\sum_{i=1}^m \lambda_i f_i$.
\end{enumerate}
\end{proposition}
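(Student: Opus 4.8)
The plan is to prove the three-way equivalence (i) $\Leftrightarrow$ (ii) $\Leftrightarrow$ (iii) by establishing (ii) $\Leftrightarrow$ (iii) and invoking Proposition~\ref{prop:necessary-general} together with Lemma~\ref{lem:equiv-stationarity} for the rest. Since Lemma~\ref{lem:equiv-stationarity} already gives (i)\,$\Leftrightarrow$\,(ii) for general $C^1$ objectives (Pareto criticality $=$ convex-hull stationarity $=$ $\cG(x^\star)=0$), and the implication (i)\,$\Leftarrow$\,(i) requires no convexity, the only place convexity is genuinely needed is to close the loop through (iii). So the real content is: under convexity, $\cG(x^\star)=0$ if and only if $x^\star$ minimizes some scalarization $f_\lambda$.

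First I would prove (iii) $\Rightarrow$ (ii): if $x^\star\in\arg\min_x f_\lambda(x)$ for some $\lambda\in\Delta^m$, then by the first-order optimality condition for the (convex, $C^1$) function $f_\lambda$ we have $\nabla f_\lambda(x^\star)=\sum_i\lambda_i\nabla f_i(x^\star)=0$, which is exactly convex-hull stationarity, hence $\cG(x^\star)=0$. This direction is immediate and uses only that a stationary point of a convex differentiable function is a global minimizer — actually we only need the trivial direction (minimizer $\Rightarrow$ gradient zero), so no convexity is even required here.

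Next, (ii) $\Rightarrow$ (iii): suppose $\cG(x^\star)=0$, so there is $\lambda\in\Delta^m$ with $\sum_i\lambda_i\nabla f_i(x^\star)=0$. Fix this $\lambda$ and consider $f_\lambda=\sum_i\lambda_if_i$. Since each $f_i$ is convex, $f_\lambda$ is convex; since $\nabla f_\lambda(x^\star)=0$, the first-order condition for convex functions gives $f_\lambda(y)\ge f_\lambda(x^\star)+\langle\nabla f_\lambda(x^\star),y-x^\star\rangle=f_\lambda(x^\star)$ for all $y$, i.e.\ $x^\star\in\arg\min_x f_\lambda(x)$. This is the step where convexity is essential and is the ``main obstacle'' only in the sense that it is the unique nontrivial implication; everything else is bookkeeping. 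Finally I would assemble the cycle: (i)\,$\Leftrightarrow$\,(ii) is Lemma~\ref{lem:equiv-stationarity}; (ii)\,$\Rightarrow$\,(iii) and (iii)\,$\Rightarrow$\,(ii) as above; and for completeness note that (iii)\,$\Rightarrow$\,(i) can also be seen directly — a minimizer of $f_\lambda$ admits no common strict descent direction since such a $v$ would force $\langle\nabla f_\lambda(x^\star),v\rangle<0$, contradicting $\nabla f_\lambda(x^\star)=0$ — and (i)\,$\Rightarrow$\,(iii) follows by chaining through (ii). I would close by remarking that the convexity hypothesis enters only in (ii)\,$\Rightarrow$\,(iii) (equivalently (i)\,$\Rightarrow$\,(iii)); without it, (iii) is strictly stronger, which is consistent with Proposition~\ref{prop:necessary-general} covering only the necessity direction in the general case.
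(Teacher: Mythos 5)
There is a genuine gap: you conflate \emph{Pareto criticality} with \emph{weak Pareto optimality}. Lemma~\ref{lem:equiv-stationarity} states that criticality (no common strict descent direction, a local first-order condition) is equivalent to $\cG(x^\star)=0$; it says nothing about weak Pareto optimality (a global condition: no $y$ anywhere with $f_i(y)<f_i(x^\star)$ for all $i$). Your claim that the lemma ``already gives (i)\,$\Leftrightarrow$\,(ii) for general $C^1$ objectives'' is false, and the paper's own remark after the proposition points this out: in the nonconvex $C^1$ case stationarity is necessary but \emph{not} sufficient for weak Pareto optimality (think of a single objective $f(x)=-x^2$ at $x=0$). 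Consequently, the direction (ii)\,$\Rightarrow$\,(i) is never correctly established in your argument. Your ``direct'' verification of (iii)\,$\Rightarrow$\,(i) has the same defect: showing that a minimizer of $f_\lambda$ admits no common strict descent direction proves criticality, not weak Pareto optimality, so the cycle back to (i) is not closed. What you do prove correctly is (ii)\,$\Leftrightarrow$\,(iii) (the convexity step $\nabla f_\lambda(x^\star)=0\Rightarrow x^\star\in\arg\min f_\lambda$ is fine) and, via Proposition~\ref{prop:necessary-general}, (i)\,$\Rightarrow$\,(ii); but the equivalence requires a return implication you do not supply.

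The missing step is easy and is exactly how the paper closes the loop: (iii)\,$\Rightarrow$\,(i) follows from \emph{global} minimality of $f_\lambda$, with no appeal to descent directions. If $x^\star\in\arg\min f_\lambda$ and there existed $y$ with $f_i(y)<f_i(x^\star)$ for all $i$, then since $\lambda\in\Delta^m$ has nonnegative entries summing to one,
\[
f_\lambda(y)\;=\;\sum_{i=1}^m \lambda_i f_i(y)\;<\;\sum_{i=1}^m \lambda_i f_i(x^\star)\;=\;f_\lambda(x^\star),
\]
a contradiction. (Alternatively, under convexity a strict improver $y$ yields $\nabla f_i(x^\star)^\top(y-x^\star)\le f_i(y)-f_i(x^\star)<0$ for all $i$, turning it into a common descent direction; but some such argument must be made explicitly.) With that implication added, your structure (i)\,$\Rightarrow$\,(ii)\,$\Rightarrow$\,(iii)\,$\Rightarrow$\,(i) matches the paper's proof; your closing remark that convexity is needed only in (ii)\,$\Rightarrow$\,(iii) is then correct, since (iii)\,$\Rightarrow$\,(i) as fixed above uses only global minimality of the scalarization.
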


\begin{proof}
(i)$\Rightarrow$(ii) is Proposition~\ref{prop:necessary-general}. (ii)$\Rightarrow$(iii): if $\sum_i \lambda_i\nabla f_i(x^\star)=0$ for some $\lambda\in\Delta^m$, then $x^\star$ satisfies the first-order optimality condition for the convex function $f_\lambda$, so $x^\star$ minimizes $f_\lambda$. (iii)$\Rightarrow$(i): if $x^\star$ minimizes $f_\lambda$ and there were $y$ with $f_i(y)<f_i(x^\star)$ for all $i$, then $f_\lambda(y)<f_\lambda(x^\star)$, a contradiction.
\end{proof}

\begin{remark}
Under convexity, Proposition~\ref{prop:convex-iff} shows that $\cG(\cdot)$ is an \emph{exact} first-order optimality measure for weak Pareto optimality: $\cG(x^\star)=0$ if and only if $x^\star$ is weakly Pareto optimal. In the general (nonconvex) $C^1$ case, $\cG(x^\star)=0$ remains a \emph{necessary} condition but is not sufficient, as usual for first-order stationarity.
\end{remark}

\subsection{Oracle Model and Algorithm Classes}

We formally define the classes of first-order algorithms considered in this work. The broadest class contains nearly all modern iterative methods.

\begin{definition}[Span First-Order Methods]
An algorithm is a \emph{span first-order method} if its iterates satisfy
\[
x^{(t)} \;\in\; x^{(0)} + \mathrm{span}\{\nabla f_i(x^{(j)}):\ i=1,\dots,m,\ j=0,\dots,t-1\}.
\]
This general class allows for adaptive step sizes and momentum terms that depend on the entire history of observed gradients.
\end{definition}

\begin{definition}[Oblivious Span First-Order Methods]\label{def:oblivious-span}
A span first-order method is \emph{oblivious} if, for each $t\ge 1$, there exist coefficients
$\{\beta^{(t)}_{i,j}\}_{i=1,\ldots,m;\,j=0,\ldots,t-1}$ that are fixed in advance (possibly as a function of known
problem parameters such as $L,\mu,m,T$) and do \emph{not} depend on the realized oracle replies, such that
\[
x^{(t)} \;=\; x^{(0)} + \sum_{j=0}^{t-1}\sum_{i=1}^{m}\beta^{(t)}_{i,j}\,\nabla f_i\bigl(x^{(j)}\bigr).
\]
Equivalently, the method satisfies the span property and uses only \emph{pre-scheduled} (instance-independent)
linear combination coefficients.
\end{definition}

\begin{remark}
The class of oblivious span methods includes accelerated schemes with fixed parameter schedules such as Nesterov's accelerated gradient method on a fixed scalarization.
\end{remark}

For our sharpest convex lower bound, we consider a more restricted, non-adaptive class.

\begin{definition}[Oblivious One-Step Gradient Methods]
Fix $\lambda\in\Delta^m$ and define $f_\lambda(x)=\sum_{i=1}^m \lambda_i f_i(x)$. An \emph{oblivious one-step gradient method} is an algorithm that generates iterates by
\[
x^{(t+1)} = x^{(t)} - \alpha_t \,\nabla f_\lambda(x^{(t)}), \qquad t=0,1,2,\dots,
\]
with pre-scheduled step sizes $\{\alpha_t\}$ satisfying $0 \le \alpha_t \le 1/L$ for all $t$ that may depend only on known problem parameters (e.g., \(L\), \(\mu\)), but not on the oracle’s feedback. For a quadratic \(g(x)=\tfrac{1}{2}x^\top H x - b^\top x\), the error satisfies \(x^{(t)}-x^\star = p_t(H)(x^{(0)}-x^\star)\) with
\[
p_t(\zeta) \;=\; \prod_{k=0}^{t-1} \bigl(1-\alpha_k \zeta\bigr), \qquad p_t(0)=1.
\]
\end{definition}

\begin{remark}
The oblivious one-step class models standard gradient descent with a fixed step schedule. It does not include methods with momentum, such as  AGD or Polyak's heavy-ball method.
\end{remark}

\begin{lemma}[Krylov Representation for Span Methods on Quadratics]\label{lem:krylov}
Let $g:\mathbb{R}^d\to\mathbb{R}$ be the quadratic
\[
g(x)=\tfrac12\,x^\top H x - b^\top x,
\]
where $H\in\mathbb{R}^{d\times d}$ is symmetric and $b\in\operatorname{range}(H)$, so that $g$ attains its minimum and there exists $x^\star\in\arg\min g$ with $H x^\star=b$. Consider any iterative algorithm whose iterates $\{x^{(t)}\}_{t\ge0}$ satisfy the \emph{span property}
\begin{equation}\label{eq:span-property}
x^{(t)} \in x^{(0)}+\operatorname{span}\{\nabla g(x^{(0)}),\ldots,\nabla g(x^{(t-1)})\}
\quad\text{for all }t\ge1.
\end{equation}
Then, for every $t\ge0$, there exists a real polynomial $p_t$ of degree at most $t$ with $p_t(0)=1$ such that
\[
x^{(t)}-x^\star \;=\; p_t(H)\,\bigl(x^{(0)}-x^\star\bigr).
\]
Moreover, if the method is an \emph{oblivious span} method (Definition~\ref{def:oblivious-span}) when applied to $g$,
then for each $t$ the polynomial $p_t$ depends only on the method's fixed coefficients (and on $t$), and is therefore
independent of the specific quadratic instance $(H,b)$.

Furthermore, if the method is an oblivious one-step gradient method on $g$, i.e.,
\[
x^{(t+1)}=x^{(t)}-\alpha_t\,\nabla g(x^{(t)})\quad(t\ge0)
\]
for some pre-scheduled stepsizes $\{\alpha_t\}_{t\ge0}$, then
\[
p_t(\zeta)=\prod_{k=0}^{t-1}\bigl(1-\alpha_k\,\zeta\bigr)\qquad(t\ge1),\qquad p_0\equiv 1.
\]
The same conclusions hold when $g$ is any fixed scalarization of a quadratic multiobjective instance.
\end{lemma}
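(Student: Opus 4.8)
The plan is to prove the Krylov representation by induction on $t$, exploiting the affine structure of the gradient map for a quadratic. First I would record the key identity: since $\nabla g(x) = Hx - b$ and $Hx^\star = b$, we have $\nabla g(x) = H(x - x^\star)$ for every $x$. This linearizes everything around $x^\star$: knowing $x^{(j)} - x^\star = p_j(H)(x^{(0)} - x^\star)$ immediately gives $\nabla g(x^{(j)}) = H p_j(H)(x^{(0)} - x^\star)$, which lies in $\mathrm{span}\{H^k(x^{(0)} - x^\star) : 1 \le k \le j+1\}$.

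The base case $t=0$ is immediate with $p_0 \equiv 1$. For the inductive step, assume the claim holds for all indices $\le t-1$. The span property \eqref{eq:span-property} writes $x^{(t)} = x^{(0)} + \sum_{j=0}^{t-1} c_j \nabla g(x^{(j)})$ for some scalars $c_j$ (depending on the algorithm, possibly on the history — this is harmless). Substituting the inductive expressions, $x^{(t)} - x^\star = (x^{(0)} - x^\star) + \sum_{j=0}^{t-1} c_j H p_j(H)(x^{(0)} - x^\star) = q(H)(x^{(0)} - x^\star)$ where $q(\zeta) := 1 + \sum_{j=0}^{t-1} c_j \zeta\, p_j(\zeta)$. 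Since each $p_j$ has degree $\le j \le t-1$, each term $\zeta\, p_j(\zeta)$ has degree $\le t$, so $\deg q \le t$; and $q(0) = 1 + 0 = 1$ because every summand carries a factor $\zeta$. Setting $p_t := q$ closes the induction. For the multiobjective scalarization case, one simply notes that a fixed scalarization $g = f_\lambda$ of a quadratic instance is itself a quadratic $\tfrac12 x^\top H x - b^\top x$ with $H = \sum_i \lambda_i H_i$, $b = \sum_i \lambda_i b_i$, and the span of the individual objective gradients $\{\nabla f_i(x^{(j)})\}$ at a point all equal scalar multiples along... more carefully, since the algorithm is run on $f_\lambda$, the relevant gradients are $\nabla f_\lambda(x^{(j)})$, so the same argument applies verbatim.

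For the oblivious one-step addendum, I would again induct: $p_0 \equiv 1$, and from $x^{(t+1)} - x^\star = (x^{(t)} - x^\star) - \alpha_t \nabla g(x^{(t)}) = (I - \alpha_t H)(x^{(t)} - x^\star) = (I - \alpha_t H)\, p_t(H)(x^{(0)} - x^\star)$, so $p_{t+1}(\zeta) = (1 - \alpha_t \zeta)\, p_t(\zeta) = \prod_{k=0}^{t}(1 - \alpha_k \zeta)$, with $p_{t+1}(0) = 1$ and $\deg p_{t+1} \le t+1$ automatic. The only genuinely delicate point — and the one I would state carefully rather than gloss — is that the coefficients $c_j$ in the span expansion may depend adaptively on the observed gradients; this does not affect the argument because for a \emph{fixed} instance the resulting $x^{(t)}$ is still a fixed vector, hence equals $p_t(H)(x^{(0)}-x^\star)$ for the \emph{realized} polynomial $p_t$ (the polynomial may differ across instances, but that is exactly what the resisting-oracle framework later exploits). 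I anticipate no serious obstacle; the main care is bookkeeping the degree bound and the normalization $p_t(0) = 1$ through the induction, and being explicit that $b \in \operatorname{range}(H)$ is what makes $x^\star$ (and hence the whole representation) well-defined even when $H$ is singular.
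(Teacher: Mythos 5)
Your proof is correct and follows essentially the same route as the paper: induction on $t$ using $\nabla g(x)=H(x-x^\star)$, the span expansion giving $p_t(\zeta)=1+\zeta q(\zeta)$ with $\deg p_t\le t$ and $p_t(0)=1$, and the linear recurrence $(I-\alpha_t H)$ for the product form. The only item the paper adds beyond your argument is a short remark verifying that the representation is independent of which minimizer $x^\star$ (with $Hx^\star=b$) is chosen when $H$ is singular, which your fixed-choice argument does not need but which you could note for completeness.
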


\begin{proof}
Fix any minimizer $x^\star$ with $H x^\star=b$; such a point exists by the assumption $b\in\operatorname{range}(H)$. For every $x$, we then have
\begin{equation}\label{eq:grad-translation}
\nabla g(x)=Hx-b=H\,(x-x^\star).
\end{equation}
Let $e^{(t)}:=x^{(t)}-x^\star$ and $e^{(0)}:=x^{(0)}-x^\star$. We prove by induction on $t$ that
\begin{equation}\label{eq:krylov-claim}
e^{(t)} \;=\; p_t(H)\,e^{(0)} \quad\text{for some polynomial } p_t \text{ with }\deg p_t\le t \text{ and } p_t(0)=1.
\end{equation}

\emph{Base case $t=0$.} Trivially, $e^{(0)}=I\,e^{(0)}$, so \eqref{eq:krylov-claim} holds with $p_0\equiv 1$.

\emph{Inductive step.} Assume \eqref{eq:krylov-claim} holds for all indices up to $t$. By the span property \eqref{eq:span-property}, there exist scalars $\{\beta_j^{(t)}\}_{j=0}^t$ such that
\[
x^{(t+1)} \;=\; x^{(0)} + \sum_{j=0}^{t} \beta_j^{(t)}\,\nabla g\bigl(x^{(j)}\bigr).
\]
(If the method is oblivious span, we take $\{\beta_j^{(t)}\}$ to be exactly the pre-scheduled coefficients specified
by the method.)
Subtracting $x^\star$ and using \eqref{eq:grad-translation} gives
\begin{equation}\label{eq:etplus1}
e^{(t+1)} \;=\; e^{(0)} + \sum_{j=0}^{t} \beta_j^{(t)}\,H\,e^{(j)}.
\end{equation}
By the induction hypothesis, for each $0\le j\le t$ there exists a polynomial $p_j$ with $\deg p_j\le j$ and $p_j(0)=1$ such that $e^{(j)}=p_j(H)\,e^{(0)}$. Substituting into \eqref{eq:etplus1} yields
\[
e^{(t+1)} \;=\; \Bigl(I + H\,\sum_{j=0}^{t} \beta_j^{(t)}\,p_j(H)\Bigr)\,e^{(0)}.
\]
Define the polynomial
\[
q_t(\zeta)\;:=\;\sum_{j=0}^{t} \beta_j^{(t)}\,p_j(\zeta),
\qquad\text{so that}\qquad
\deg q_t \le \max_{0\le j\le t}\deg p_j \le t,
\]
and set
\[
p_{t+1}(\zeta)\;:=\;1+\zeta\,q_t(\zeta).
\]
Then $\deg p_{t+1}\le t+1$, $p_{t+1}(0)=1$, and
\[
e^{(t+1)} \;=\; p_{t+1}(H)\,e^{(0)}.
\]
This completes the induction and proves \eqref{eq:krylov-claim} for all $t\ge0$.

\medskip
\emph{Obliviousness and instance-independence.}
If the method is an oblivious span method (Definition~\ref{def:oblivious-span}), then the coefficients
$\{\beta_j^{(t)}\}_{j=0}^t$ in \eqref{eq:etplus1} are fixed in advance (they may depend on $t$ and on known problem
parameters, but not on the realized oracle replies). Hence the recursion that constructs $p_{t+1}$ from
$(p_0,\ldots,p_t)$ uses only these fixed coefficients. Therefore, for each $t$, the resulting polynomial $p_t$ is
determined solely by the method's fixed coefficients (and by $t$), and in particular does not depend on $(H,b)$.

It remains to establish the stated product form for oblivious one-step methods. In that case,
\[
e^{(t+1)} \;=\; x^{(t)}-x^\star-\alpha_t\,\nabla g(x^{(t)})
\;=\; \bigl(I-\alpha_t H\bigr)\,e^{(t)} \qquad(t\ge0),
\]
using \eqref{eq:grad-translation}. Iterating this linear recurrence and recalling $e^{(0)}=p_0(H)\,e^{(0)}$ with $p_0\equiv 1$ gives
\[
e^{(t)} \;=\; \Bigl(\prod_{k=0}^{t-1}\bigl(I-\alpha_k H\bigr)\Bigr)\,e^{(0)}
\;=\; p_t(H)\,e^{(0)},
\]
with $p_t(\zeta)=\prod_{k=0}^{t-1}(1-\alpha_k \zeta)$, which has degree $t$ and satisfies $p_t(0)=1$.

\medskip
\emph{Independence of the choice of minimizer.} If $H$ is singular, the minimizer may not be unique: any $x^\star$ with $Hx^\star=b$ differs from another minimizer by a vector in $\ker(H)$. Let $x^\star{}'$ be another minimizer and write $x^\star{}'=x^\star+z$ with $Hz=0$. Then $e'^{(0)}=x^{(0)}-x^\star{}'=e^{(0)}-z$ and, by the proven representation,
\[
x^{(t)}-x^\star{}' \;=\; p_t(H)\,e'^{(0)} \;=\; p_t(H)\,e^{(0)} - p_t(0)\,z.
\]
Since $x^{(t)}-x^\star{}'=(x^{(t)}-x^\star)-z$, consistency for all $z\in\ker(H)$ requires and is ensured by $p_t(0)=1$. Thus the representation is well defined independently of the chosen minimizer.
\end{proof}

\section{Oracle Complexity Lower Bounds}\label{sec:lower}

\subsection{A Non-Degenerate Lifting Construction}\label{sec:non-degenerate}
A common concern with oracle lower bounds is that the hard instances may be degenerate from a multiobjective perspective—for example, the objectives may coincide up to constants, or the Pareto set may collapse to a singleton—so that the lower bound reflects a scalar artifact rather than a genuinely multiobjective phenomenon. 
To rule this out, we introduce a simple but robust \emph{lifting} device: it embeds an arbitrary scalar function $g$ defined on a subspace $V$ into an $m$-objective instance on $\R^d=V\oplus W$ by adding objective-dependent quadratic terms on a complementary subspace $W$. 
The resulting objectives are distinct, the Pareto set is non-singleton, and the Pareto stationarity gap $\cG$ admits an explicit decomposition showing that it dominates the scalar stationarity measure $\|\nabla g\|$. 
This ``gap domination'' property is the mechanism that transfers first-order lower bounds from the scalar problem to the multiobjective setting (including nonconvex $g$, since convexity is not used in the domination argument).

\begin{theorem}[Non-degenerate lifting and stationarity-gap domination]\label{thm:lifting}
Fix integers $m\ge2$ and $d\ge1$. Let $\R^d=V\oplus W$ be an orthogonal direct sum with $\dim(W)\ge m-1$, and pick $m$ affinely independent points $a_1,\dots,a_m\in W$. 
Let $g:V\to\R$ be $C^1$ with $L_g$-Lipschitz gradient, and fix $\gamma>0$. Define for $i=1,\dots,m$:
\begin{equation}\label{eq:lift-def}
f_i(x):=g(x_V)+\frac{\gamma}{2}\|x_W-a_i\|^2,\qquad x=x_V+x_W.
\end{equation}
Then:
\begin{enumerate}[label=(\roman*),leftmargin=2em]
\item The objectives $\{f_i\}_{i=1}^m$ are distinct.
\item Each $f_i$ is $L$-smooth with $L=\max\{L_g,\gamma\}$. If $g$ is convex (resp.\ $\mu$-strongly convex) and $\gamma\ge0$ (resp.\ $\gamma\ge\mu$), then each $f_i$ is convex (resp.\ $\mu$-strongly convex).
\item For all $x\in\R^d$,
\begin{equation}\label{eq:gap-formula}
\cG(x)^2=\|\grad g(x_V)\|^2+\gamma^2\,\mathrm{dist}\!\left(x_W,\mathrm{conv}\{a_1,\dots,a_m\}\right)^2,
\end{equation}
in particular $\cG(x)\ge \|\grad g(x_V)\|$.
\item If $g$ is convex and admits a minimizer (i.e., $\arg\min g\neq\emptyset$), then the (weak)
Pareto set equals
\[
\cP=(\arg\min g)\times \mathrm{conv}\{a_1,\dots,a_m\}.
\]
In particular, $\mathrm{conv}\{a_1,\dots,a_m\}$ is a non-singleton polytope. If $g$ is $\mu$-strongly convex, then $\arg\min g=\{x_V^\star\}$ is a singleton and
$\cP=\{x_V^\star\}\times \mathrm{conv}\{a_1,\dots,a_m\}$.
\item Regardless of convexity of $g$, the Pareto-stationary set of the lifted instance satisfies
\[
\cS:=\{x:\cG(x)=0\}=\{x_V\in V:\grad g(x_V)=0\}\times \mathrm{conv}\{a_1,\dots,a_m\}.
\]
\end{enumerate}
Consequently, any oracle lower bound on $\|\grad g(u^{(t)})\|$ for a scalar first-order method applied to $g$ transfers verbatim to a lower bound on $\cG(x^{(t)})$ for the lifted MOO instance.
\end{theorem}
\begin{proof}
Throughout, we use the orthogonal decomposition $\R^d=V\oplus W$ and write $x=x_V+x_W$ with $x_V\in V$ and $x_W\in W$. 
Differentiating \eqref{eq:lift-def} yields, for each $i$,
\begin{equation}\label{eq:lift-grad}
\nabla f_i(x)=\bigl(\nabla g(x_V),\ \gamma(x_W-a_i)\bigr).
\end{equation}

\medskip
\noindent\textbf{(i) Distinctness of objectives.}
Because the points $a_1,\ldots,a_m$ are affinely independent, in particular they are pairwise distinct. Fix $i\neq j$ and take any $x_V\in V$ with $x_W=a_i$. Then
\[
f_i(x_V+a_i)=g(x_V),
\qquad
f_j(x_V+a_i)=g(x_V)+\frac{\gamma}{2}\|a_i-a_j\|^2>g(x_V),
\]
so $f_i\not\equiv f_j$. Hence the objectives are distinct.

\medskip
\noindent\textbf{(ii) Smoothness and (strong) convexity.}
We first verify smoothness. Since $g$ has $L_g$-Lipschitz gradient on $V$, for all $u,v\in V$,
$\|\nabla g(u)-\nabla g(v)\|\le L_g\|u-v\|$. Using \eqref{eq:lift-grad}, we obtain for any $x,y\in\R^d$,
\begin{align*}
\|\nabla f_i(x)-\nabla f_i(y)\|^2
&=\|\nabla g(x_V)-\nabla g(y_V)\|^2+\gamma^2\|x_W-y_W\|^2 \\
&\le L_g^2\|x_V-y_V\|^2+\gamma^2\|x_W-y_W\|^2 \\
&\le \max\{L_g^2,\gamma^2\}\bigl(\|x_V-y_V\|^2+\|x_W-y_W\|^2\bigr) \\
&=\max\{L_g^2,\gamma^2\}\,\|x-y\|^2,
\end{align*}
where we used orthogonality of $V$ and $W$ in the last step. Taking square roots yields
$\|\nabla f_i(x)-\nabla f_i(y)\|\le \max\{L_g,\gamma\}\|x-y\|$, hence each $f_i$ is
$L$-smooth with $L=\max\{L_g,\gamma\}$.

If $g$ is convex, then $x\mapsto g(x_V)$ is convex on $\R^d$ (as a composition of the linear projection $x\mapsto x_V$ with a convex function), and $x\mapsto \frac{\gamma}{2}\|x_W-a_i\|^2$ is convex for $\gamma\ge 0$; therefore $f_i$ is convex.
If $g$ is $\mu$-strongly convex on $V$, then $x\mapsto g(x_V)$ is $\mu$-strongly convex along directions in $V$, while $x\mapsto \frac{\gamma}{2}\|x_W-a_i\|^2$ is $\gamma$-strongly convex along directions in $W$; consequently, for all $x,y\in\R^d$,
\[
f_i(y)\ge f_i(x)+\langle\nabla f_i(x),y-x\rangle+\frac{\min\{\mu,\gamma\}}{2}\|y-x\|^2,
\]
so $f_i$ is $\min\{\mu,\gamma\}$-strongly convex. In particular, if $\gamma\ge\mu$ then each $f_i$ is $\mu$-strongly convex.

\medskip
\noindent\textbf{(iii) Stationarity-gap formula and domination.}
For any $\lambda\in\Delta^m$, summing \eqref{eq:lift-grad} gives
\[
\sum_{i=1}^m \lambda_i \nabla f_i(x)
=\left(\nabla g(x_V),\ \gamma\Bigl(x_W-\sum_{i=1}^m \lambda_i a_i\Bigr)\right).
\]
By orthogonality of $V$ and $W$,
\[
\left\|\sum_{i=1}^m \lambda_i \nabla f_i(x)\right\|^2
=\|\nabla g(x_V)\|^2+\gamma^2\left\|x_W-\sum_{i=1}^m \lambda_i a_i\right\|^2.
\]
The first term is independent of $\lambda$, and $\{\sum_i \lambda_i a_i:\lambda\in\Delta^m\}=\mathrm{conv}\{a_1,\ldots,a_m\}$, hence
\begin{align*}
\cG(x)^2
&=\min_{\lambda\in\Delta^m}\left\|\sum_{i=1}^m \lambda_i \nabla f_i(x)\right\|^2 \\
&=\|\nabla g(x_V)\|^2+\gamma^2\,\mathrm{dist}\!\left(x_W,\mathrm{conv}\{a_1,\ldots,a_m\}\right)^2,
\end{align*}
which is \eqref{eq:gap-formula}. In particular, $\cG(x)\ge \|\nabla g(x_V)\|$.

\medskip
\noindent\textbf{(iv) Pareto set in the convex case.}
Assume $g$ is convex and admits a minimizer $x_V^\star\in\arg\min g$. By Proposition~\ref{prop:convex-iff}, weak Pareto optima coincide with minimizers of some scalarization $f_\lambda=\sum_{i=1}^m\lambda_i f_i$. For any $\lambda\in\Delta^m$,
\begin{align*}
f_\lambda(x_V,x_W)
&= g(x_V)+\frac{\gamma}{2}\sum_{i=1}^m \lambda_i\|x_W-a_i\|^2 \\
&= g(x_V)+\frac{\gamma}{2}\left\|x_W-\bar a(\lambda)\right\|^2+\mathrm{const}(\lambda),
\qquad \bar a(\lambda):=\sum_{i=1}^m \lambda_i a_i,
\end{align*}
where $\mathrm{const}(\lambda)$ is independent of $(x_V,x_W)$. Thus minimization decouples: the $V$-component is minimized by any $x_V\in\arg\min g$, and the quadratic in $x_W$ is uniquely minimized at $x_W=\bar a(\lambda)$. Hence
\[
\arg\min_{x\in\mathbb{R}^d} f_\lambda(x) = (\arg\min g)\times\{\bar a(\lambda)\}.
\]
As $\lambda$ ranges over $\Delta^m$, $\bar a(\lambda)$ ranges over $\mathrm{conv}\{a_1,\ldots,a_m\}$, yielding
\[
\cP=(\arg\min g)\times \mathrm{conv}\{a_1,\dots,a_m\}.
\]
Since $m\ge 2$ and the $a_i$ are affinely independent, $\mathrm{conv}\{a_i\}$ is a non-singleton
polytope. If $g$ is $\mu$-strongly convex, then $\arg\min g=\{x_V^\star\}$ is a singleton and the
same description reduces to $\cP=\{x_V^\star\}\times \mathrm{conv}\{a_1,\dots,a_m\}$.

\medskip
\noindent\textbf{(v) Pareto-stationary set for general $g$.}
By \eqref{eq:gap-formula}, $\cG(x)=0$ holds if and only if $\nabla g(x_V)=0$ and
$\mathrm{dist}(x_W,\mathrm{conv}\{a_i\})=0$, i.e., $x_W\in \mathrm{conv}\{a_1,\ldots,a_m\}$. Therefore,
\[
\cS=\{x:\cG(x)=0\}=\{x_V\in V:\nabla g(x_V)=0\}\times \mathrm{conv}\{a_1,\ldots,a_m\}.
\]

\medskip
\noindent\textbf{Transfer claim.}
The pointwise domination $\cG(x)\ge \|\nabla g(x_V)\|$ from (iii) implies that for any iterate sequence $(x^{(t)})_{t\ge0}$,
\[
\cG(x^{(t)})\ge \|\nabla g(x_V^{(t)})\|\qquad\text{for all }t.
\]
Hence any lower bound on $\|\nabla g(\cdot)\|$ along the projected iterates transfers immediately to a lower bound on $\cG(\cdot)$ along the lifted iterates.
\end{proof}

\subsection{Scalar hard instances: polynomial-degree and geometric hardness}\label{sec:scalar-hard}

We will use two types of scalar lower bounds:
(i) \emph{quadratic} lower bounds for span/oblivious methods that reduce to extremal polynomial problems; and
(ii) \emph{geometric} (zero-chain) lower bounds for finding stationary points under a bounded initial value gap, due to \cite{CarmonDuchiHinderSidford2020, CarmonDuchiHinderSidford2021}.

\subsubsection{Quadratic hard instances for span/oblivious methods}

We first construct scalar quadratic hard instances tailored to span/oblivious first-order methods.
For quadratics, these methods generate iterates that admit a polynomial/Krylov representation, which enables explicit finite-horizon lower bounds on $\min_{t\le T}\|\nabla g(x^{(t)})\|$ and separates the behavior of the different method classes. Lemma~\ref{lem:hard-g} below states the specific scalar quadratic instances and bounds that will serve as base ingredients for our multiobjective lower bounds via the lifting theorem.

\begin{lemma}[Hard scalar quadratic instances]\label{lem:hard-g}
Fix $T\in\mathbb{N}$ and $R>0$.
\begin{enumerate}[label=(\alph*),leftmargin=2em]
\item \textup{(Strongly convex case)} Fix $L\ge\mu>0$ and $\kappa:=L/\mu$. For any span first-order method making $T$ oracle calls on quadratics, there exist a dimension $d \ge T+1$,  an $L$-smooth $\mu$-strongly convex quadratic $g$ and an initialization with $\|x^{(0)}-x^\star\|=R$ such that the $T$-th iterate satisfies
\[
\|\nabla g(x^{(T)})\|\;\ge\;
\mu\,R\,\frac{2}{\rho^T+\rho^{-T}}
\;\ge\;
\mu\,R\left(\frac{\sqrt{\kappa}-1}{\sqrt{\kappa}+1}\right)^T,
\qquad
\rho:=\frac{\sqrt{\kappa}+1}{\sqrt{\kappa}-1}.
\]
\item \textup{(Convex, oblivious one-step)} Fix $L>0$. For any oblivious one-step gradient method run for $T$ steps with step sizes $\alpha_t\in[0,1/L]$, there exists an $L$-smooth convex quadratic $g$ and an initialization with $\|x^{(0)}-x^\star\|=R$, where $x^\star\in\arg\min g$ is chosen as a closest minimizer to $x^{(0)}$, such that
\[
\min_{0\le t\le T}\|\nabla g(x^{(t)})\|\;\ge\;\frac{LR}{4(T+1)}.
\]
\item \textup{(Convex, oblivious span: polynomial-degree)} Fix $L>0$. For any oblivious span first-order method run for $T$ steps, there exists an $L$-smooth convex quadratic $g$ and an initialization with $\|x^{(0)}-x^\star\|=R$, where $x^\star\in\arg\min g$ is chosen as a closest minimizer to $x^{(0)}$, such that
\[
\|\nabla g(x^{(T)})\|\;\ge\;\frac{LR}{2(T+1)^2}.
\]
\end{enumerate}
\end{lemma}

\begin{proof}
We prove each part separately.

\smallskip
\noindent\textbf{(a) Strongly convex case.}
Let $g(x)=\frac12x^\top Hx-b^\top x$ with $H\succ 0$. For any iterate sequence $\{x^{(t)}\}$ produced by a span first-order method on a quadratic, the error admits the polynomial representation (Lemma~\ref{lem:krylov}):
\begin{equation}\label{eq:poly-repr}
x^{(t)}-x^\star \;=\; p_t(H)\,\bigl(x^{(0)}-x^\star\bigr),
\qquad \deg p_t\le t,\quad p_t(0)=1.
\end{equation}
Set $e^{(0)}:=x^{(0)}-x^\star$ with $\|e^{(0)}\|=R$. Since $H\succeq \mu I$, we have for any vector $v$ that $\|Hv\|\ge \mu \|v\|$, hence
\begin{equation}\label{eq:grad-lb}
\bigl\|\nabla g(x^{(T)})\bigr\|
=\bigl\|H\,p_T(H)\,e^{(0)}\bigr\|
\;\ge\;\mu\,\bigl\|p_T(H)\,e^{(0)}\bigr\|.
\end{equation}
We now take worst case over the instance $(H,e^{(0)})$ with $\sigma(H)\subset[\mu,L]$ and $\|e^{(0)}\|=R$. For any fixed degree-$T$ polynomial $p_T$ with $p_T(0)=1$,
\[
\sup_{\substack{\sigma(H)\subset[\mu,L]\\ \|e^{(0)}\|=R}}\bigl\|p_T(H)\,e^{(0)}\bigr\|
\;=\; R\;\sup_{\zeta\in[\mu,L]} |p_T(\zeta)|,
\]
because for $H$ symmetric the operator norm $\|p_T(H)\|$ equals $\max_{\zeta\in\sigma(H)}|p_T(\zeta)|$, and we may align $e^{(0)}$ with an eigenvector where the maximum is attained.
Therefore, combining with \eqref{eq:grad-lb} and then minimizing over the (algorithm-induced) polynomial $p_T$ yields
\begin{equation}\label{eq:minimax}
\sup_{\substack{\sigma(H)\subset[\mu,L],\,\|e^{(0)}\|=R}}
\bigl\|\nabla g(x^{(T)})\bigr\|
\;\ge\; \mu R \cdot
\inf_{\substack{\deg p_T\le T\\ p_T(0)=1}}
\;\max_{\zeta\in[\mu,L]} |p_T(\zeta)|.
\end{equation}
The minimax problem on the right-hand side is the classical Chebyshev extremal problem on an interval with an off-interval normalization (at $\zeta=0$). The solution is detailed in Appendix~\ref{app:cheb-strong}. Let
\[
\xi \;=\; \frac{2\zeta-(L+\mu)}{L-\mu}\in[-1,1], 
\qquad \xi_0 \;=\; \frac{-\, (L+\mu)}{L-\mu} \;=\; -\,\frac{\kappa+1}{\kappa-1},
\]
and define $\tilde p_T(\xi):=p_T(\zeta(\xi))$. Then the extremal value is achieved by the scaled Chebyshev polynomial of the first kind,
\[
\tilde p_T^\star(\xi)\;=\;\frac{T_T(\xi)}{T_T(\xi_0)},\qquad
\min_{\deg\tilde p_T\le T,\;\tilde p_T(\xi_0)=1}\;\max_{\xi\in[-1,1]}|\tilde p_T(\xi)|
\;=\;\frac{1}{|T_T(\xi_0)|}.
\]
For $|x|>1$ one has $T_T(x)=\tfrac12\bigl(\rho^T+\rho^{-T}\bigr)$ with $\rho=x+\sqrt{x^2-1}$. Substituting $x=|\xi_0|=\frac{\kappa+1}{\kappa-1}$ gives
\[
|T_T(\xi_0)| \;=\; \frac{\rho^T+\rho^{-T}}{2}, 
\qquad \rho=\frac{\sqrt{\kappa}+1}{\sqrt{\kappa}-1}.
\]
Thus the extremal value of \eqref{eq:minimax} equals $\mu R \cdot \frac{2}{\rho^T+\rho^{-T}}$, proving
\[
\sup_{\text{instances}} \bigl\|\nabla g(x^{(T)})\bigr\|
\;\ge\; \mu R \cdot \frac{2}{\rho^T+\rho^{-T}}
\;\ge\; \mu R \left(\frac{\sqrt{\kappa}-1}{\sqrt{\kappa}+1}\right)^T,
\]
where the last inequality follows from $\frac{2}{a+a^{-1}}\ge a^{-1}$ for $a\ge 1$.

\smallskip
\noindent\textbf{(b) Convex oblivious one-step case.}
This is precisely Corollary~\ref{cor:oblivious-gradient-lb} in Appendix~\ref{app:prod-poly-tight} (with $t=T$).

\smallskip
\noindent\textbf{(c) Convex oblivious span polynomial-degree case.}
Fix an arbitrary \emph{oblivious span} method and consider any $L$-smooth convex quadratic
$g(x)=\tfrac12 x^\top Hx-b^\top x$ with $\sigma(H)\subset[0,L]$.
By Lemma~\ref{lem:krylov} (oblivious case), there exists a polynomial $p_T$ with $\deg p_T\le T$ and $p_T(0)=1$
that depends only on the method (and on $T$), such that
\[
x^{(T)}-x^\star = p_T(H)\,(x^{(0)}-x^\star),
\]
and hence
\[
\nabla g(x^{(T)}) = H\,p_T(H)\,(x^{(0)}-x^\star).
\]
Let $\|x^{(0)}-x^\star\|=R$. For any fixed $\zeta\in[0,L]$, choose $H$ diagonal with a single eigenvalue $\zeta$
(and all remaining eigenvalues equal to $0$) and align $x^{(0)}-x^\star$ with the corresponding eigenvector. Then
\[
\|\nabla g(x^{(T)})\| \;=\; R\,|\zeta\,p_T(\zeta)|.
\]
Since $p_T$ is fixed independently of $\zeta$ (obliviousness), taking the worst case over $\zeta\in[0,L]$ yields
\[
\sup_{\substack{g\ \text{$L$-smooth convex quadratic}\\ \|x^{(0)}-x^\star\|=R}}
\|\nabla g(x^{(T)})\|
\;\ge\;
R\,\max_{\zeta\in[0,L]}|\zeta p_T(\zeta)|.
\]
Finally, taking the infimum over oblivious span methods is lower bounded by the infimum over all admissible polynomials,
and Appendix~\ref{app:markov} gives
\[
\inf_{\substack{\deg p_T\le T\\ p_T(0)=1}}\ \max_{\zeta\in[0,L]} |\zeta p_T(\zeta)|
\;\ge\; \frac{L}{2(T+1)^2}.
\]
This yields $\|\nabla g(x^{(T)})\|\ge \frac{LR}{2(T+1)^2}$ as claimed.
\end{proof}

\subsubsection{Geometric stationary-point lower bounds under value-gap bounds}

We next state the geometric scalar stationary-point lower bounds that drive our new convex and nonconvex MOO results.

\begin{lemma}[Convex $L$-smooth stationary-point lower bound under bounded value gap {\cite[Theorem~1]{CarmonDuchiHinderSidford2021}}]\label{lem:carmon-convex}
Fix the initial point $x^{(0)}=0$.
A \emph{deterministic first-order method} $\mathcal A$ is any procedure that, for
$t=0,1,2,\dots$, adaptively chooses query points
\[
x^{(t+1)}=\Phi_t\!\left(\{(g(x^{(s)}),\nabla g(x^{(s)}))\}_{s=0}^{t}\right),
\]
for some deterministic mappings $\Phi_t$, where the oracle returns $(g(x),\nabla g(x))$
at each query.

Fix $L>0$ and $\Delta>0$. For any deterministic first-order method $\mathcal A$
and any $T\in\mathbb N$, there exist a dimension $d\ge 2T$ and a convex function
$g:\mathbb R^d\to\mathbb R$ with $L$-Lipschitz gradient such that
\[
g(x^{(0)})-\inf_{x\in\mathbb R^d} g(x)\le \Delta,
\]
and the iterates $\{x^{(t)}\}_{t=0}^{T}$ generated by $\mathcal A$ on $g$ satisfy
\[
\min_{0\le t\le T}\|\nabla g(x^{(t)})\|
\;\ge\;
\frac{\sqrt{L\Delta}}{4(T+1)}.
\]
Equivalently, any deterministic first-order method requires
$T=\Omega\!\left(\sqrt{L\Delta}/\varepsilon\right)$ oracle calls in the worst case
to guarantee an output $x$ with $\|\nabla g(x)\|\le \varepsilon$ over this function class.
\end{lemma}

\begin{proof}
Carmon, Duchi, Hinder, and Sidford \cite[Theorem~1]{CarmonDuchiHinderSidford2021}
prove the following: for any $\varepsilon>0$ and any deterministic first-order method $\mathcal A$,
there exists a convex function $f$ with $L$-Lipschitz gradient and $f(x^{(0)})-\inf_x f(x)\le \Delta$
such that $\mathcal A$ must make at least $\frac{\sqrt{L\Delta}}{4\varepsilon}$ first-order oracle calls
to guarantee finding a point $x$ with $\|\nabla f(x)\|\le \varepsilon$.
(Their statement is given for an orthogonally invariant function class $K_1(\Delta,L)$,
and the reduction from zero-respecting to general deterministic methods is handled via a
resisting-oracle / rotation argument, which increases the dimension to at most $2T$.)

Now fix $T\in\mathbb N$ and set $\varepsilon := \frac{\sqrt{L\Delta}}{4(T+1)}$.
Then $\frac{\sqrt{L\Delta}}{4\varepsilon}=T+1$, so any method using only $T$ oracle calls
cannot guarantee $\|\nabla g(x)\|\le \varepsilon$ on the above convex $L$-smooth class.
Therefore, for the corresponding worst-case instance $g$, all iterates up to time $T$ satisfy
$\|\nabla g(x^{(t)})\|>\varepsilon$, and hence
\[
\min_{0\le t\le T}\|\nabla g(x^{(t)})\|\ge \varepsilon
=\frac{\sqrt{L\Delta}}{4(T+1)}.
\]
Rearranging yields the equivalent complexity form $T=\Omega(\sqrt{L\Delta}/\varepsilon)$.
\end{proof}

\begin{lemma}[Nonconvex $L$-smooth stationary-point lower bound (Lipschitz gradient only) {\cite[Theorem~1]{CarmonDuchiHinderSidford2020}}]\label{lem:nonconvex-scalar}
Fix the initial point $x^{(0)}=0$.
A \emph{deterministic first-order method} $\mathcal A$ is any procedure that, for $t=0,1,2,\dots$,
adaptively chooses query points
\[
x^{(t+1)} = \Phi_t\!\left(\{(g(x^{(s)}),\nabla g(x^{(s)}))\}_{s=0}^t\right),
\]
for some deterministic mappings $\Phi_t$, where the oracle returns the pair $(g(x),\nabla g(x))$ at each query.

There exists a universal constant $c>0$ such that for any $L>0$, $\Delta>0$, any deterministic
first-order method $\mathcal A$, and any $T\in\mathbb N$, there exist a dimension $d\ge 2T+1$
and an $L$-smooth (not necessarily convex) function $g:\mathbb R^d\to\mathbb R$ satisfying
\[
g(x^{(0)})-\inf_{x\in\mathbb R^d} g(x)\le \Delta,
\]
such that the iterates $\{x^{(t)}\}_{t=0}^T$ generated by $\mathcal A$ on $g$ obey
\[
\min_{0\le t\le T}\|\nabla g(x^{(t)})\|\;\ge\; c\,\sqrt{\frac{L\Delta}{T+1}}.
\]
Equivalently, any deterministic first-order method requires $T=\Omega(L\Delta/\varepsilon^2)$
oracle calls in the worst case to guarantee an output $x$ with $\|\nabla g(x)\|\le \varepsilon$.
\end{lemma}

\begin{proof}
Carmon, Duchi, Hinder, and Sidford prove (see \cite[Theorem~1]{CarmonDuchiHinderSidford2020}
specialized to $p=1$) that there exists a universal constant $C>0$ such that for every $\varepsilon>0$,
every deterministic first-order method $\mathcal A$ requires at least
\[
T_\varepsilon \;\ge\; C\,\frac{L\Delta}{\varepsilon^2}
\]
oracle calls in the worst case over the class of $L$-smooth functions with initial gap at most $\Delta$,
i.e., there exists an $L$-smooth (possibly nonconvex) $g$ with $g(x^{(0)})-\inf g\le \Delta$ such that
$\|\nabla g(x^{(t)})\|>\varepsilon$ for all $t<T_\varepsilon$.

Now fix $T\in\mathbb N$ and set $\varepsilon := \sqrt{\frac{CL\Delta}{T+1}}$.
Then $T+1 < C\frac{L\Delta}{\varepsilon^2}$, hence $T < T_\varepsilon$ for the corresponding worst-case
instance $g$, which implies $\|\nabla g(x^{(t)})\|>\varepsilon$ for all $t\le T$ and therefore
\[
\min_{0\le t\le T}\|\nabla g(x^{(t)})\| \;\ge\; \varepsilon
\;=\; \sqrt{\frac{CL\Delta}{T+1}}.
\]
Renaming $c:=\sqrt{C}$ yields the stated bound.
The equivalence to $T=\Omega(L\Delta/\varepsilon^2)$ follows by rearranging the inequality.
\end{proof}

\subsection{Strongly Convex MOO: A Tight Linear Rate}
We now state the corresponding multiobjective lower bound in the strongly convex regime.

\begin{theorem}[Strongly Convex Lower Bound]\label{thm:strong-last-iterate}
Fix $L\ge \mu>0$, $T\in\mathbb{N}$, $\kappa:=L/\mu$, and $R>0$. Consider any span first-order method that makes $T$ first-order oracle calls when applied to a quadratic instance. Then there exist integers $m \ge 2$ and $d \ge (T+1) + (m-1)$, an MOO instance $(f_1,\dots,f_m):\mathbb{R}^{d}\to\mathbb{R}^m$
with $m$ objectives each $L$-smooth and $\mu$-strongly convex, and an initialization $x^{(0)}$ with $\mathrm{dist}(x^{(0)},\cP)=R$ such that the $T$-th iterate produced by the method satisfies
\begin{align*}
\cG\bigl(x^{(T)}\bigr)
  &\;\ge\; \mu\,R\;\frac{2}{\rho^T+\rho^{-T}} \\
  &\;\ge\; \mu\,R\left(\frac{\sqrt{\kappa}-1}{\sqrt{\kappa}+1}\right)^T,
  \qquad \rho := \frac{\sqrt{\kappa}+1}{\sqrt{\kappa}-1}.
\end{align*}
In particular, any such method has worst-case iteration complexity 
\[
\Omega\!\bigl(\sqrt{\kappa}\log(1/\varepsilon)\bigr)
\]
to reduce the Pareto stationarity gap below $\varepsilon$.
\end{theorem}

\begin{proof}
By Lemma~\ref{lem:hard-g}(a), there exists a dimension $\dim(V)\ge T+1$ and an $L$-smooth $\mu$-strongly convex quadratic $g$ on a subspace $V$ that forces the stated lower bound on $\|\nabla g(u^{(T)})\|$ for any span method (with $\|u^{(0)}-u^\star\|=R$). Apply the lifting construction of Theorem~\ref{thm:lifting} with $\gamma=\mu$ to obtain an MOO instance $(f_1,\dots,f_m)$ with $\mathrm{dist}(x^{(0)},\cP)=R$ and
\[
\cG(x^{(T)})\;\ge\;\|\nabla g(x_V^{(T)})\|.
\]
Since the $V$-component of the iterate sequence produced by a span method on the lifted instance satisfies the span property with respect to $g$, the scalar lower bound transfers, proving the claim. The iteration-complexity statement follows by solving $\mu R\bigl((\sqrt{\kappa}-1)/(\sqrt{\kappa}+1)\bigr)^T\le \varepsilon$ for $T$.
\end{proof}

\subsection{Convex MOO: Oblivious vs. Adaptive Methods}

\subsubsection{Oblivious One-Step Gradient Methods: An $\Omega(1/T)$ Min-Iterate Bound}

For the more restricted class of oblivious one-step gradient methods, we establish a lower bound of order $\Omega(1/T)$. This bound applies to the minimum Pareto stationarity gap achieved over the first $T$ iterates and demonstrates a fundamental performance limit for non-adaptive algorithms that do not incorporate momentum or history-dependent updates.

\begin{theorem}[Oblivious Convex Lower Bound]\label{thm:oblivious-lower}
Fix $L,R>0$ and $T\in\mathbb{N}$. For any oblivious one-step gradient method run for $T$ steps, there exists an MOO instance with $L$-smooth convex objectives and $\mathrm{dist}(x^{(0)}, \cP) = R$ such that
\[
\min_{0\le t\le T} \cG(x^{(t)}) \;\ge\; \frac{LR}{4(T+1)}.
\]
\end{theorem}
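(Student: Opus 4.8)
The plan is to reduce to a single-objective obstruction and then exploit the rigid product structure $p_t(\zeta)=\prod_{k=0}^{t-1}(1-\alpha_k\zeta)$ of oblivious one-step iterates. Concretely, I would take the MOO instance to be a lifted quadratic in the spirit of the ``Lifting to MOO'' remark: let $g(x)=\tfrac12 x^\top H x - b^\top x$ be a convex quadratic ($H\succeq 0$, $b\in\operatorname{range}(H)$) on a coordinate subspace, and design a companion objective so that the Pareto stationarity gap $\cG(x)$ is bounded below by $\|\nabla g(x)\|$ on the relevant iterates while $\mathrm{dist}(x^{(0)},\cP)=R$. Since the method is \emph{oblivious one-step}, it runs plain gradient descent on a \emph{fixed} scalarization $f_\lambda$; by choosing the hard instance so that $f_\lambda=g$ (or so that $\nabla f_\lambda(x^{(t)})=\nabla g(x^{(t)})$ along the iterates), the entire trajectory is governed by Lemma~\ref{lem:krylov}, giving $x^{(t)}-x^\star = p_t(H)(x^{(0)}-x^\star)$ with $p_t(\zeta)=\prod_{k<t}(1-\alpha_k\zeta)$, $0\le\alpha_k\le 1/L$.

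Next I would set up the spectral lower bound. Align $x^{(0)}-x^\star$ with an eigenvector of $H$ of eigenvalue $\zeta\in(0,L]$ that is chosen \emph{adversarially after} the step schedule is fixed (the resisting-oracle move: the schedule $\{\alpha_k\}$ is pre-committed, so the instance may depend on it). Then $\|\nabla g(x^{(t)})\| = \zeta\,|p_t(\zeta)|\cdot R = \zeta R\prod_{k=0}^{t-1}|1-\alpha_k\zeta|$, and we want to show that $\min_{0\le t\le T}\zeta R\prod_{k<t}|1-\alpha_k\zeta|$ cannot be made smaller than $LR/(4(T+1))$ for every admissible schedule. Using $0\le\alpha_k\le 1/L$, each factor $|1-\alpha_k\zeta|\in[\,|1-\zeta/L|,\,1\,]$ when $\zeta\le L$, and more usefully $1-\alpha_k\zeta\ge 1-\zeta/L\ge 0$, so all partial products are nonnegative and \emph{decreasing} in $t$; hence the min over $t\le T$ is the value at $t=T$, namely $\zeta R\prod_{k=0}^{T-1}(1-\alpha_k\zeta)$. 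I would then pick $\zeta$ to balance: choosing $\zeta$ of order $L/(T+1)$ makes each factor $1-\alpha_k\zeta\ge 1-\zeta/L\ge 1-1/(T+1)$, so $\prod_{k<T}(1-\alpha_k\zeta)\ge (1-\tfrac{1}{T+1})^{T}\ge e^{-1}\ge \tfrac14\cdot\tfrac{T+1}{?}$; tuning the constant (e.g. $\zeta = L/(T+1)$ gives $\zeta R\cdot\prod \ge \frac{LR}{T+1}\cdot e^{-1} > \frac{LR}{4(T+1)}$ for all $T\ge 1$ after checking small $T$ by hand) yields the claimed bound. The key point is that an oblivious schedule \emph{cannot} know $\zeta$ and therefore cannot place any factor $1-\alpha_k\zeta$ near $0$; the best it can do against the worst $\zeta$ of size $\Theta(L/T)$ leaves the product $\Theta(1)$, so the gradient norm stays $\Theta(LR/T)$.

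The remaining work is to make the MOO lift clean and non-degenerate: I would use a two-objective instance $f_1 = g + (\text{linear in an orthogonal coordinate})$, $f_2 = g - (\text{same linear term})$ so that (i) for the midpoint weight $\lambda=(\tfrac12,\tfrac12)$ the scalarization is exactly $f_\lambda=g$, making the oblivious method with that $\lambda$ run GD on $g$; (ii) $\cG(x)=\min_{\lambda\in\Delta^2}\|\lambda\nabla f_1(x)+(1-\lambda)\nabla f_2(x)\|\ge\|\nabla g(x)\|$ on the iterate subspace, because the orthogonal linear parts cannot cancel the $g$-component; and (iii) $\mathrm{dist}(x^{(0)},\cP)=R$ by placing $x^{(0)}$ at distance $R$ from the shared minimizer of $g$ along the $H$-eigendirection while the orthogonal offsets are tuned so the Pareto set is exactly that point (or a segment at distance $R$). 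The main obstacle I anticipate is precisely this geometric bookkeeping — ensuring simultaneously that a \emph{fixed} scalarization reproduces $g$ exactly along the whole trajectory (so Lemma~\ref{lem:krylov}'s product form applies verbatim), that $\cG\ge\|\nabla g\|$ uniformly on the iterates, and that $R_\lambda = R$ with a genuinely non-singleton, conflicting Pareto front; the spectral/polynomial estimate itself is short once the reduction is in place.
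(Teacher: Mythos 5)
Your proposal is correct and shares the paper's overall skeleton --- Lemma~\ref{lem:krylov}'s product representation for oblivious one-step methods, adversarial placement of a single eigenvalue, monotonicity of the partial products (each factor lies in $[0,1]$) so the minimum over $t\le T$ sits at $t=T$, and a lift in which $\cG$ dominates $\|\nabla g(x_V)\|$ while $\mathrm{dist}(x^{(0)},\cP)=R$ --- but both key ingredients are executed by a genuinely different route. For the extremal estimate, the paper (Appendix~\ref{app:prod-poly-tight}) uses $\prod_k(1-\alpha_k\zeta)\ge 1-\zeta\sum_k\alpha_k$ and picks the eigenvalue $\zeta^\star=\min\{L,\,1/(2\sum_k\alpha_k)\}$ adaptively to the committed schedule, yielding the constant $1/4$; you instead fix $\zeta=L/(T+1)$ independently of the schedule and bound each factor by $1-\zeta/L$, giving $\bigl(1-\tfrac{1}{T+1}\bigr)^{T}\ge e^{-1}>1/4$ --- a shorter computation that even produces a schedule-independent hard instance with a slightly better constant (no small-$T$ check is needed, since $(1-\tfrac1{T+1})^T\ge e^{-1}$ for all $T\ge1$). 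For the lift, the paper takes $f_i(x)=g(x_V)+\tfrac{L}{2}\|x_W-a_i\|^2$, which gives $\cG(x)\ge\|\nabla g(x_V)\|$, decoupled $V$-dynamics, and a bounded non-singleton polytope Pareto set for arbitrary $m$ (the form also needed for Theorem~\ref{thm:non-singleton}); your antisymmetric linear perturbation $f_{1,2}=g\pm c^\top x_W$ gives $\cG(x)=\|\nabla g(x_V)\|$ exactly but an unbounded affine Pareto set and, as written, only $m=2$ (for general $m$ use linear terms $c_i$ whose convex hull contains the origin). One point to tighten: the oblivious one-step class fixes an \emph{arbitrary} $\lambda\in\Delta^m$ chosen by the algorithm, so you may not assume the midpoint weight; what rescues your construction is that the $V$-component of $\nabla f_\lambda$ equals $\nabla g(x_V)$ for \emph{every} $\lambda$, hence the $V$-trajectory is exactly gradient descent on $g$ and $\cG(x^{(t)})=\|\nabla g(x_V^{(t)})\|$ irrespective of $\lambda$ --- state this decoupling explicitly instead of leaning on $\lambda=(\tfrac12,\tfrac12)$.
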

\begin{proof}
By Lemma~\ref{lem:hard-g}(b), for the given stepsize schedule there exists an
$L$-smooth convex quadratic $g$ and an initialization $u^{(0)}$ such that
\[
\min_{0\le t\le T}\|\nabla g(u^{(t)})\|\;\ge\;\frac{LR}{4(T+1)}
\qquad\text{and}\qquad
\mathrm{dist}\!\bigl(u^{(0)},\arg\min g\bigr)=R.
\]
Choose $u^\star\in\arg\min g$ to be a closest minimizer to $u^{(0)}$, so that
$\|u^{(0)}-u^\star\|=\mathrm{dist}(u^{(0)},\arg\min g)=R$.

Apply Theorem~\ref{thm:lifting} with $\gamma=L$ to lift $g$ into a non-degenerate
MOO instance $(f_1,\dots,f_m)$.
By Theorem~\ref{thm:lifting}(iv), the (weak) Pareto set satisfies
\[
\cP=(\arg\min g)\times \mathrm{conv}\{a_1,\dots,a_m\}.
\]
Pick $w^{(0)}\in\mathrm{conv}\{a_1,\dots,a_m\}$ (e.g., $w^{(0)}=a_1$) and set
$x^{(0)}=(u^{(0)},w^{(0)})\in V\oplus W$. Then
\[
\mathrm{dist}\!\bigl(x^{(0)},\cP\bigr)=\mathrm{dist}\!\bigl(u^{(0)},\arg\min g\bigr)=R.
\]

Finally, Theorem~\ref{thm:lifting}(iii) gives $\cG(x)\ge \|\nabla g(x_V)\|$ for all $x$.
Hence,
\[
\min_{0\le t\le T}\cG(x^{(t)})
\;\ge\;
\min_{0\le t\le T}\|\nabla g(x_V^{(t)})\|
\;\ge\;
\frac{LR}{4(T+1)},
\]
as claimed.
\end{proof}

\subsubsection{Oblivious span methods: polynomial-degree vs.\ geometric lower bounds}

For oblivious span methods, the error polynomial $p_t$ is no longer restricted to have the product form of the one-step class, but it remains instance-independent. This leads to a universal $1/T^2$ \emph{last-iterate} lower bound based on Markov's inequality for polynomials. This bound is independent of geometric ``chain'' constructions and isolates a limitation coming solely from polynomial degree constraints within the oblivious span
model.

For fully adaptive first-order methods (allowed to use the entire oracle transcript), we will also show a stronger geometric $\Omega(1/T)$ \emph{min-iterate} lower bound under a bounded scalarization gap by lifting the
stationary-point lower bounds of Carmon--Duchi--Hinder--Sidford \cite{CarmonDuchiHinderSidford2021}.

\begin{theorem}[Polynomial-degree lower bound for oblivious span methods (last iterate)]\label{thm:oblivious-span-polydegree-lower}
Fix $L,R>0$ and $T\in\mathbb{N}$. For any oblivious span first-order method run for $T$ steps, there exists an MOO instance with $L$-smooth convex objectives and $\mathrm{dist}(x^{(0)}, \cP) = R$ such that the last iterate obeys
\[
\cG(x^{(T)}) \;\ge\; \frac{LR}{2\,(T+1)^2}.
\]
\end{theorem}
\begin{proof}
By Lemma~\ref{lem:hard-g}(c), there exists an $L$-smooth convex quadratic $g$ and an
initialization $u^{(0)}$ such that for any oblivious span method run for $T$ steps,
\[
\|\nabla g(u^{(T)})\|\;\ge\;\frac{LR}{2\,(T+1)^2}
\qquad\text{and}\qquad
\mathrm{dist}\!\bigl(u^{(0)},\arg\min g\bigr)=R.
\]
Choose $u^\star\in\arg\min g$ to be a closest minimizer to $u^{(0)}$, so that
$\|u^{(0)}-u^\star\|=\mathrm{dist}(u^{(0)},\arg\min g)=R$.

Apply Theorem~\ref{thm:lifting} with $\gamma=L$ to lift $g$ into a non-degenerate
MOO instance $(f_1,\dots,f_m)$. By Theorem~\ref{thm:lifting}(iv),
\[
\cP=(\arg\min g)\times \mathrm{conv}\{a_1,\dots,a_m\}.
\]
Pick $w^{(0)}\in\mathrm{conv}\{a_1,\dots,a_m\}$ (e.g., $w^{(0)}=a_1$) and set
$x^{(0)}=(u^{(0)},w^{(0)})\in V\oplus W$. Then
\[
\mathrm{dist}\!\bigl(x^{(0)},\cP\bigr)=\mathrm{dist}\!\bigl(u^{(0)},\arg\min g\bigr)=R.
\]

Finally, Theorem~\ref{thm:lifting}(iii) gives $\cG(x)\ge \|\nabla g(x_V)\|$ for all $x$.
Therefore,
\[
\cG(x^{(T)})
\;\ge\;
\|\nabla g(x_V^{(T)})\|
\;\ge\;
\frac{LR}{2\,(T+1)^2},
\]
as claimed.
\end{proof}

\begin{remark}{Iterate selection.} Theorem~\ref{thm:oblivious-lower} is a \emph{min-iterate} lower bound, whereas Theorem~\ref{thm:oblivious-span-polydegree-lower} is a \emph{last-iterate} bound; the two statements are therefore not directly comparable a priori.
\end{remark}

\begin{remark}[The polynomial-degree bound is loose]\label{rem:loose}
Theorem~\ref{thm:oblivious-span-polydegree-lower} is a \emph{last-iterate} bound derived from polynomial extremal inequalities and does not exploit geometric hard instances (e.g., zero-chains). As we show next, geometric stationary-point lower bounds imply a stronger \emph{min-iterate} $\Omega(1/T)$ lower bound for adaptive methods in smooth convex MOO under a bounded scalarization gap.
\end{remark}

We now lift the convex stationary-point lower bound of Carmon--Duchi--Hinder--Sidford \cite{CarmonDuchiHinderSidford2021}. The appropriate ``value gap'' assumption in MOO is expressed through a scalarization $f_\lambda$.

Our lifting construction (Theorem~\ref{thm:lifting}) produces a multiobjective instance $f_1,\dots,f_m$ from a scalar function $g$ such that the Pareto stationarity gap dominates the scalar gradient norm, i.e., $\cG(x)\ge \|\nabla g(x_V)\|$ for every $x=(x_V,x_W)$. For lower bounds proved for restricted algorithm classes (e.g., oblivious one-step or (oblivious) span methods), this domination property is sufficient: the iterates must remain in prescribed linear spans / polynomial images of past first-order information, so the $V$-component of the iterates on the lifted instance inherits the same restriction, and the corresponding scalar lower bounds transfer directly via $\cG(x)\ge \|\nabla g(x_V)\|$.

In contrast, the lower bounds under a bounded initial value gap $\Delta$ (such as those of \cite{CarmonDuchiHinderSidford2021}) are minimax statements over \emph{all} deterministic first-order methods. Here an additional reduction is required because a multiobjective first-order oracle returns strictly richer information than a scalar oracle: at each query $x$ it provides the full collection $\{(f_i(x),\nabla f_i(x))\}_{i=1}^m$, whereas in the scalar setting one observes only $(g(x_V),\nabla g(x_V))$. To legitimately invoke such scalar minimax lower bounds, we therefore use a \emph{simulation} (transcript) argument: for the lifted family, any deterministic multiobjective algorithm can be simulated by a deterministic scalar first-order method on $g$ with the same number of oracle calls (since the multiobjective replies can be reconstructed from $(g(x_V),\nabla g(x_V))$ and the known quadratic $W$-terms). Consequently, if a multiobjective method were able to beat the scalar value-gap lower bound on the lifted instances, the simulated scalar method would also beat it, yielding a contradiction.

\begin{theorem}[Geometric convex lower bound for adaptive methods under a bounded scalarization gap]\label{thm:convex-geometric}
Fix $L>0$, $\Delta>0$, $T\in\mathbb{N}$, and $m\ge2$. Consider the standard \emph{vector first-order oracle} for MOO that, at any query $x\in\R^d$, returns the transcript item
\[
\mathcal{O}_F(x)\;:=\;\bigl( (f_i(x),\nabla f_i(x))\bigr)_{i=1}^m.
\]
Let $\mathsf{Alg}$ be any deterministic first-order method that makes $T$ oracle calls to $\mathcal{O}_F$ and outputs iterates
$x^{(0)},x^{(1)},\ldots,x^{(T)}$.
Then there exists an $m$-objective MOO instance $F=(f_1,\ldots,f_m)$ with the following properties:
\begin{enumerate}[label=(\roman*),leftmargin=2em]
\item Each $f_i$ is convex and $L$-smooth, the objectives are distinct, and the Pareto set $\cP$ is non-singleton (in fact, it contains a non-singleton polytope subset).
\item \emph{(Bounded scalarization gap.)} For the scalarization $\lambda=e_1$,
\[
\Delta_{e_1}(x^{(0)}) \;:=\; f_{e_1}(x^{(0)})-\inf_x f_{e_1}(x)
\;=\; f_1(x^{(0)})-\inf_x f_1(x)\;\le\;\Delta.
\]
\item \emph{(Min-iterate lower bound.)} The iterates satisfy
\[
\min_{0\le t\le T}\cG(x^{(t)})\;\ge\;\frac{\sqrt{L\Delta}}{4(T+1)}.
\]
\end{enumerate}
Equivalently, any deterministic first-order method needs $T=\Omega(\sqrt{L\Delta}/\varepsilon)$ oracle calls to guarantee $\min_{t\le T}\cG(x^{(t)})\le\varepsilon$ on this class of instances.
\end{theorem}

\begin{proof}
\noindent\textbf{Oracle model as deterministic transcript maps.}
A deterministic first-order method $\mathsf{Alg}$ interacting with $\mathcal{O}_F$ can be formalized as a sequence of deterministic maps
\[
\Phi_t:\Big(\R^d\times (\R\times\R^d)^m\Big)^{t}\to\R^d,\qquad t\ge 0,
\]
such that, for $t=0,1,\ldots,T-1$,
\[
x^{(t+1)}=\Phi_t\Big(x^{(0)},\mathcal{O}_F(x^{(0)}),\ldots,x^{(t)},\mathcal{O}_F(x^{(t)})\Big).
\]
This is the standard deterministic first-order oracle model; $\mathsf{Alg}$ may use all past queried points and oracle replies.

\medskip
\noindent\textbf{Step 1: Choose a scalar hard instance from Carmon et al.}
Apply Lemma~\ref{lem:carmon-convex} to the deterministic first-order method that will be constructed below (it will make $T$ scalar first-order oracle calls).
Thus there exists a convex $L$-smooth function $g:V\to\R$ (with $\dim(V)\ge 2T$) and an initialization $u^{(0)}\in V$ satisfying
\begin{equation}\label{eq:carmon-gap}
g(u^{(0)})-\inf_{u\in V} g(u)\le \Delta
\end{equation}
and such that the iterates produced by that scalar method obey
\begin{equation}\label{eq:carmon-lb}
\min_{0\le t\le T}\|\nabla g(u^{(t)})\|
\;\ge\;\frac{\sqrt{L\Delta}}{4(T+1)}.
\end{equation}

\medskip
\noindent\textbf{Step 2: Lift $g$ to a non-degenerate MOO instance.}
Let $W$ be an orthogonal complement of $V$ with $\dim(W)\ge m-1$ and pick affinely independent points $a_1,\ldots,a_m\in W$.
Define $F=(f_1,\ldots,f_m)$ on $\R^d=V\oplus W$ by the lifting construction (Theorem~\ref{thm:lifting}) with $\gamma=L$:
\[
f_i(u,w)\;:=\; g(u)+\frac{L}{2}\|w-a_i\|^2,\qquad i=1,\ldots,m.
\]
By Theorem~\ref{thm:lifting}(i)--(iv), each $f_i$ is convex and $L$-smooth, the objectives are distinct, and the Pareto set is
\[
\cP=(\arg\min g)\times\mathrm{conv}\{a_1,\ldots,a_m\},
\]
hence non-singleton. In particular, for any $u^\star\in\arg\min g$, the slice
$\{u^\star\}\times \mathrm{conv}\{a_1,\dots,a_m\}\subseteq \cP$ is a non-singleton polytope.

We choose the initialization
\[
x^{(0)}:=(u^{(0)},a_1).
\]

\medskip
\noindent\textbf{Step 3: Verify the bounded scalarization gap $\Delta_{e_1}(x^{(0)})\le\Delta$.}
For $\lambda=e_1$, $f_{e_1}=f_1$ and
\[
f_1(u,w)=g(u)+\frac{L}{2}\|w-a_1\|^2.
\]
Hence $\inf_{(u,w)} f_1(u,w)=\inf_u g(u)$ (attained at $(u^\star,a_1)$), and therefore
\[
\Delta_{e_1}(x^{(0)})
=f_1(u^{(0)},a_1)-\inf_{(u,w)} f_1(u,w)
=g(u^{(0)})-\inf_u g(u)
\le \Delta,
\]
by \eqref{eq:carmon-gap}. This proves item (ii).
\emph{Importantly, this does not make the MOO instance degenerate: for $i\neq1$, the objectives differ because $a_i\neq a_1$. The role of $e_1$ is only to express a scalar \emph{value-gap} condition needed to invoke Lemma~\ref{lem:carmon-convex}.}

\medskip
\noindent\textbf{Step 4: A transcript-level reduction from MOO to the scalar oracle for $g$.}
Let $\mathcal{O}_g$ be the scalar first-order oracle for $g$, returning $\mathcal{O}_g(u):=(g(u),\nabla g(u))$.
We now define a deterministic scalar first-order method $\widetilde{\mathsf{Alg}}$ interacting with $\mathcal{O}_g$ that \emph{simulates} the transcript of $\mathsf{Alg}$ interacting with $\mathcal{O}_F$ on the lifted instance.

Inductively suppose $\widetilde{\mathsf{Alg}}$ has constructed points $(u^{(0)},w^{(0)}),\ldots,(u^{(t)},w^{(t)})$ equal to the iterates of $\mathsf{Alg}$ on $F$.
It queries $\mathcal{O}_g$ at $u^{(t)}$ and receives $(g(u^{(t)}),\nabla g(u^{(t)}))$.
Using the known lift parameters $L$ and $(a_i)_{i=1}^m$, it reconstructs the full vector-oracle reply at $x^{(t)}=(u^{(t)},w^{(t)})$ as
\[
\mathcal{O}_F(x^{(t)})=
\Bigl(g(u^{(t)})+\tfrac{L}{2}\|w^{(t)}-a_i\|^2,\ (\nabla g(u^{(t)}),\,L(w^{(t)}-a_i))\Bigr)_{i=1}^m.
\]
Since $\mathsf{Alg}$ is deterministic and its next iterate is given by the deterministic map $\Phi_t$ applied to the transcript,
$\widetilde{\mathsf{Alg}}$ can compute
\[
x^{(t+1)}=\Phi_t\Big(x^{(0)},\mathcal{O}_F(x^{(0)}),\ldots,x^{(t)},\mathcal{O}_F(x^{(t)})\Big)
\]
and set $(u^{(t+1)},w^{(t+1)})=x^{(t+1)}$.
Thus the sequence $(u^{(t)})_{t=0}^T$ generated by $\widetilde{\mathsf{Alg}}$ is exactly the $V$-projection of the iterates generated by $\mathsf{Alg}$ on the lifted instance.
Moreover, $\widetilde{\mathsf{Alg}}$ makes exactly one scalar oracle call per iteration, hence $T$ calls total.

Therefore, the Carmon lower bound \eqref{eq:carmon-lb} applies to the simulated sequence $(u^{(t)})$.

\medskip
\noindent\textbf{Step 5: Transfer to the Pareto stationarity gap.}
By Theorem~\ref{thm:lifting}(iii), for all $t$,
\[
\cG(x^{(t)})\;\ge\;\|\nabla g(u^{(t)})\|.
\]
Taking the minimum over $0\le t\le T$ and using \eqref{eq:carmon-lb} gives
\[
\min_{0\le t\le T}\cG(x^{(t)})
\;\ge\;\min_{0\le t\le T}\|\nabla g(u^{(t)})\|
\;\ge\;\frac{\sqrt{L\Delta}}{4(T+1)}.
\]
This proves item (iii) and completes the proof.
\end{proof}

Theorem~\ref{thm:convex-geometric} is naturally stated in terms of a scalarization value gap $\Delta$. On our lifted family one can calibrate $\Delta$ and the Pareto distance scale $R$ simultaneously, yielding the $LR/T$ form.

\begin{corollary}[From $\Delta$ to $R$ on the lifted family]\label{cor:LRoverT}
Fix $L>0$, $R>0$, $T\in\mathbb{N}$, and $m\ge2$. There exists an MOO instance with $m$ distinct convex $L$-smooth objectives and an initialization $x^{(0)}$ such that:
\begin{enumerate}[label=(\roman*),leftmargin=2em]
\item $\mathrm{dist}(x^{(0)},\cP)=R$ and $\cP$ is non-singleton (in particular, it contains a non-singleton polytope subset);
\item for $\lambda=e_1$ the scalarization gap satisfies $\Delta_{e_1}(x^{(0)})\asymp L R^2$ (up to universal constants);
\item for any deterministic first-order method run for $T$ steps,
\[
\min_{0\le t\le T}\cG(x^{(t)})\;\ge\; c_0\,\frac{LR}{T+1}
\]
for a universal constant $c_0>0$.
\end{enumerate}
\end{corollary}

\begin{proof}
Let $(g_0,u_0^{(0)})$ be the base hard instance from Theorem~\ref{thm:convex-geometric} for parameters $(L_0,\Delta_0)$,
and denote $u_0^\star\in\arg\min g_0$ and $R_0:=\|u_0^{(0)}-u_0^\star\|>0$.
For $a,s>0$, define the scaled function $g(u):=a\, g_0(su)$ and initialization $u^{(0)}:=u_0^{(0)}/s$.
Then $\nabla g$ is $L=as^2L_0$-Lipschitz and
\[
g(u^{(0)})-\inf g = a\bigl(g_0(u_0^{(0)})-\inf g_0\bigr)=a\Delta_0=: \Delta.
\]
Moreover, since $\arg\min g=\{u_0^\star/s: u_0^\star\in\arg\min g_0\}$, we have
\[
\mathrm{dist}(u^{(0)},\arg\min g)=\frac{1}{s}\,\mathrm{dist}(u_0^{(0)},\arg\min g_0)=\frac{R_0}{s}.
\]
Choose $s:=R_0/R$ and $a:=L/(s^2L_0)$. Then $\nabla g$ is $L$-Lipschitz and
\[
\Delta=a\Delta_0=\frac{L\Delta_0}{s^2L_0}=\frac{\Delta_0}{L_0R_0^2}\,LR^2 \asymp LR^2,
\]
where the implied constants depend only on the fixed base instance.

Now lift $g$ using Theorem~\ref{thm:lifting} with $m\ge2$ distinct points $a_1,\dots,a_m$ in $W$ and parameter $\gamma=L$,
and set the initialization $x^{(0)}:=(u^{(0)},a_1)$.
By Theorem~\ref{thm:lifting}, the Pareto set is $\mathcal P=\arg\min g\times\mathrm{conv}\{a_i\}$, hence it is non-singleton,
and since $(u^\star,a_1)\in\mathcal P$ for any $u^\star\in\arg\min g$ we have
\[
\mathrm{dist}(x^{(0)},\mathcal P)=\mathrm{dist}(u^{(0)},\arg\min g)=R.
\]
Also, for $\lambda=e_1$ we have $f_\lambda=f_1(u,w)=g(u)+\frac{L}{2}\|w-a_1\|^2$, so
$\Delta_{e_1}(x^{(0)})=g(u^{(0)})-\inf g=\Delta\asymp LR^2$.

Finally, applying Theorem~\ref{thm:convex-geometric} yields
\[
\min_{0\le t\le T}\mathcal G(x^{(t)})\ge \frac{\sqrt{L\Delta}}{4(T+1)} \asymp \frac{LR}{T+1},
\]
which proves the claim for a universal constant $c_0>0$.
\end{proof}

\subsection{Nonconvex MOO with Lipschitz gradients}\label{sec:nonconvex}

We now use the convexity-free nature of Theorem~\ref{thm:lifting} to transfer classical nonconvex stationary-point lower bounds to Pareto stationarity. We emphasize that in nonconvex MOO, $\cG(x)=0$ is a necessary stationarity condition (Lemma~\ref{lem:equiv-stationarity}) but generally not sufficient for Pareto optimality.

\begin{theorem}[Nonconvex MOO lower bound under $L$-Lipschitz gradients]\label{thm:nonconvex}
There exists a universal constant $c>0$ such that for any $L>0$, $\Delta>0$, $T\in\mathbb{N}$, and $m\ge2$, the following holds. For any deterministic first-order method making $T$ vector-oracle calls, there exists an MOO instance with $m$ distinct objectives, each $L$-smooth (not necessarily convex), and an initialization $x^{(0)}$ such that:
\begin{enumerate}[label=(\roman*),leftmargin=2em]
\item For the scalarization $\lambda=e_1$, $\Delta_{e_1}(x^{(0)})\le \Delta$.
\item The iterates satisfy
\[
\min_{0\le t\le T}\cG(x^{(t)})\;\ge\; c\,\sqrt{\frac{L\Delta}{T+1}}.
\]
\end{enumerate}
Equivalently, any deterministic first-order method requires $T=\Omega(L\Delta/\varepsilon^2)$ oracle calls to guarantee $\min_{t\le T}\cG(x^{(t)})\le\varepsilon$.
\end{theorem}

\begin{proof}
Apply Lemma~\ref{lem:nonconvex-scalar} to obtain an $L$-smooth (possibly nonconvex) scalar function $g:V\to\R$ with $g(u^{(0)})-\inf g\le\Delta$ but
\[
\min_{0\le t\le T}\|\grad g(u^{(t)})\|\ge c\sqrt{\frac{L\Delta}{T+1}}
\]
for any deterministic first-order method run for $T$ steps.

Lift $g$ to an MOO instance via Theorem~\ref{thm:lifting} with $\gamma=L$ and initialization $x^{(0)}=(u^{(0)},w^{(0)})$ where $w^{(0)}\in\mathrm{conv}\{a_i\}$. As in the convex proof, $\Delta_{e_1}(x^{(0)})=g(u^{(0)})-\inf g\le \Delta$. Moreover, by \eqref{eq:gap-formula},
\[
\cG(x^{(t)})\ge \|\grad g(u^{(t)})\|\quad\forall t.
\]
Finally, the same simulation argument as in Theorem~\ref{thm:convex-geometric} shows that the $V$-coordinates of any MOO first-order method on the lifted instance correspond to a valid scalar first-order method on $g$, so the scalar lower bound transfers:
\[
\min_{0\le t\le T}\cG(x^{(t)})\ge \min_{0\le t\le T}\|\grad g(u^{(t)})\|\ge c\sqrt{\frac{L\Delta}{T+1}}.
\]
\end{proof}

\section{Upper Bounds via Scalarization and Rate Comparison}\label{sec:upper}
The lower bounds can be matched by applying an optimal single-objective first-order method to a fixed scalarization of the multiobjective problem. Throughout this section, for any fixed $\lambda\in\Delta^m$ we consider the scalarized objective $f_\lambda(x):=\sum_{i=1}^m \lambda_i f_i(x)$, which is $L$-smooth (and $\mu$-strongly convex when stated). We write $x_\lambda^\star\in\arg\min f_\lambda$ and $R_\lambda:=\|x^{(0)}-x_\lambda^\star\|$.

To establish upper bounds that match the orders of our lower bounds, we now analyze the performance of an optimal first-order method applied to a fixed scalarization $f_{\lambda}$. We recall the celebrated accelerated gradient method (AGD) of Nesterov. The convergence guarantees for AGD are typically stated in terms of the objective function suboptimality, $f(x) - f^*$. Lemma~\ref{lem:descent} provides the crucial link needed to translate these guarantees into bounds on the gradient norm.

\smallskip
\begin{lemma}[Descent lemma for the gradient norm]\label{lem:descent}
If $f$ is an $L$-smooth convex function with minimum value $f^\star$, then for all $x$,
\[
\|\nabla f(x)\|^2 \;\le\; 2L\big(f(x)-f^\star\big).
\]
\end{lemma}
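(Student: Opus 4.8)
The plan is to prove the standard "descent lemma for the gradient norm," namely that $\|\nabla f(x)\|^2 \le 2L(f(x)-f^\star)$ for any $L$-smooth convex $f$. The core idea is to exploit the $L$-smoothness upper bound on $f$ as a quadratic model around $x$, and then minimize that model. First I would recall that $L$-smoothness of $f$ gives, for all $x,y$,
\[
f(y) \;\le\; f(x) + \langle \nabla f(x), y-x\rangle + \tfrac{L}{2}\|y-x\|^2.
\]
This is the only consequence of smoothness I will need; convexity enters only to ensure that the minimum value $f^\star$ is attained (or approached) and that the quadratic upper model is legitimate globally.

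The key step is to choose $y$ to make the right-hand side as small as possible. The quadratic $y \mapsto f(x) + \langle \nabla f(x), y-x\rangle + \tfrac{L}{2}\|y-x\|^2$ is minimized at $y = x - \tfrac{1}{L}\nabla f(x)$, giving the value $f(x) - \tfrac{1}{2L}\|\nabla f(x)\|^2$. Substituting this particular $y$ into the smoothness inequality yields
\[
f\!\left(x - \tfrac{1}{L}\nabla f(x)\right) \;\le\; f(x) - \tfrac{1}{2L}\|\nabla f(x)\|^2.
\]
Since $f^\star \le f(z)$ for every $z$, in particular for $z = x - \tfrac{1}{L}\nabla f(x)$, we get $f^\star \le f(x) - \tfrac{1}{2L}\|\nabla f(x)\|^2$, which rearranges exactly to $\|\nabla f(x)\|^2 \le 2L(f(x)-f^\star)$.

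There is essentially no obstacle here; the only point requiring a word of care is ensuring $f^\star$ is a genuine lower bound on the value at the shifted point, which is immediate from the definition of the infimum $f^\star := \inf_z f(z)$ (and convexity plus smoothness on $\R^d$ guarantees this infimum is finite whenever it is attained, as assumed in the statement). Convexity is not strictly needed for the inequality itself — only $L$-smoothness and the existence of a finite minimum value — but it is harmless to state it under the convex hypothesis as the paper does. I would present the argument in three lines: invoke the smoothness quadratic upper bound, plug in the gradient step $y = x - \tfrac1L \nabla f(x)$, and compare with $f^\star$.
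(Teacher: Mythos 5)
Your proof is correct: the standard argument of plugging the gradient step $y = x - \tfrac{1}{L}\nabla f(x)$ into the $L$-smoothness quadratic upper bound and comparing with $f^\star$ is exactly the intended (classical) derivation, and the paper itself states Lemma~\ref{lem:descent} without proof as a well-known fact. Your side remark that convexity is not needed beyond guaranteeing a finite minimum value is also accurate.
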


\smallskip
\begin{definition}[AGD on a fixed scalarization]\label{def:agd}
Let $f_\lambda$ be $L$-smooth (and $\mu$-strongly convex when stated). We use the standard Nesterov accelerated gradient method:
\begin{itemize}\itemsep4pt
  \item \emph{Convex case ($\mu=0$).} Initialize $x^0=y^0\in\R^d$ and $t_0=1$. For $k=0,1,\dots,T-1$,
  \begin{align*}
  x^{k+1} &= y^k - \tfrac{1}{L}\,\nabla f_\lambda(y^k), \\
  t_{k+1} &= \tfrac{1+\sqrt{1+4t_k^{\,2}}}{2}, \\
  y^{k+1} &= x^{k+1} + \tfrac{t_k-1}{t_{k+1}}\bigl(x^{k+1}-x^k\bigr).
  \end{align*}
  \item \emph{Strongly convex case ($\mu>0$).} Let $q:=\sqrt{\mu/L}$ and $\beta:=\tfrac{1-q}{1+q}$. Initialize $x^0=y^0\in\R^d$. For $k=0,1,\dots,T-1$,
  \begin{align*}
  x^{k+1} &= y^k - \tfrac{1}{L}\,\nabla f_\lambda(y^k), \\
  y^{k+1} &= x^{k+1} + \beta\bigl(x^{k+1}-x^k\bigr).
  \end{align*}
\end{itemize}
\end{definition}

\smallskip
The convergence properties of this algorithm are well-established and provide optimal rates for the class of first-order methods on smooth convex and smooth, strongly convex functions. We summarize these classical results in the following lemma for completeness.

\smallskip
\begin{lemma}[AGD convergence (see \cite{Nesterov04}, \S2.2; \cite{Bubeck2015}, \S3.7.2; \cite{BeckTeboulle2009}]\label{lem:agd-rate}
Let $f$ be $L$-smooth and convex with minimizer $x^\star$ and $R:=\|x^{(0)}-x^\star\|$. The AGD iterates from Definition~\ref{def:agd} satisfy
\[
f\big(x^{(T)}\big)-f(x^\star) \;\le\; \frac{2L\,R^2}{(T+1)^2}.
\]
If, in addition, $f$ is $\mu$-strongly convex with condition number $\kappa:=L/\mu$, then
\[
f\big(x^{(T)}\big)-f(x^\star) \;\le\; \frac{L+\mu}{2}\,R^2\,
\Bigg(\frac{\sqrt{\kappa}-1}{\sqrt{\kappa}+1}\Bigg)^{\!2T}.
\]
\end{lemma}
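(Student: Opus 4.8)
\textbf{Proof proposal for Lemma~\ref{lem:agd-rate} (AGD convergence).}

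The plan is to invoke the two classical convergence theorems for Nesterov's accelerated gradient method, one for each regime, and simply verify that the step-size and momentum choices in Definition~\ref{def:agd} are exactly those under which the textbook guarantees hold. Both statements are standard; the task here is bookkeeping of constants, not a new argument.

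For the convex case, I would use the estimate-sequence (or equivalently the potential-function) analysis of the FISTA/Nesterov scheme with constant step $1/L$ and the momentum parameters $t_{k+1}=(1+\sqrt{1+4t_k^2})/2$, $y^{k+1}=x^{k+1}+\frac{t_k-1}{t_{k+1}}(x^{k+1}-x^k)$. The key structural facts are: (a) the descent inequality $f(x^{k+1})\le f(y^k)-\frac{1}{2L}\|\nabla f_\lambda(y^k)\|^2$, which follows from $L$-smoothness and the gradient step; (b) the algebraic identity $t_{k+1}^2-t_{k+1}=t_k^2$ satisfied by the $t_k$ recursion; and (c) the telescoping of the potential $\Phi_k := t_{k-1}^2\bigl(f(x^{k})-f(x^\star)\bigr) + \tfrac{L}{2}\|u^{k}-x^\star\|^2$ for an appropriate auxiliary sequence $u^k$, giving $\Phi_{k+1}\le\Phi_k$. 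Combined with the lower bound $t_{k-1}\ge (k+1)/2$ (an easy induction from $t_0=1$), one obtains $f(x^{(T)})-f(x^\star)\le \Phi_0/t_{T-1}^2 \le \tfrac{L}{2}R^2/((T+1)/2)^2 = 2LR^2/(T+1)^2$. I would cite \cite{BeckTeboulle2009} (FISTA, Theorem 4.4) and \cite{Nesterov04} for the precise constant rather than reproduce the telescoping.

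For the strongly convex case, I would invoke the constant-momentum scheme with $q=\sqrt{\mu/L}$ and $\beta=(1-q)/(1+q)$, whose convergence is Theorem~2.2.3 in \cite{Nesterov04} (see also \cite{Bubeck2015}, \S3.7.2). The estimate-sequence argument there yields $f(x^{(T)})-f(x^\star)\le \bigl(f(x^{(0)})-f(x^\star)+\tfrac{\mu}{2}R^2\bigr)(1-q)^{T}$, and bounding $f(x^{(0)})-f(x^\star)\le \tfrac{L}{2}R^2$ by $L$-smoothness gives the prefactor $\tfrac{L+\mu}{2}R^2$. The rate $(1-q)^T$ is then rewritten using $1-q = 1-\sqrt{\mu/L}$; to reach the stated $\bigl((\sqrt\kappa-1)/(\sqrt\kappa+1)\bigr)^{2T}$ one notes $(1-q)\le (1-q)/(1+q) \cdot \text{(something)}$ is not tight, so instead I would quote the sharper form of the theorem that directly produces $\bigl(\tfrac{\sqrt\kappa-1}{\sqrt\kappa+1}\bigr)^{2T}=\beta^{2T}$, which is the version stated e.g. in \cite{Bubeck2015}. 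Concretely, the potential $\Phi_k$ contracts by the factor $1-q$ per step in Nesterov's normalization, and $1-q = \tfrac{\sqrt\kappa-1}{\sqrt\kappa} \ge \bigl(\tfrac{\sqrt\kappa-1}{\sqrt\kappa+1}\bigr)^{2}$ fails in the wrong direction, so the clean $2T$ exponent requires the symmetric-form analysis; I would therefore state the bound as it appears in the cited references and note that the two commonly quoted forms differ only by which of $\beta^T$ or $\beta^{2T}$ with $\beta=(1-q)/(1+q)$ versus $(1-q)^T$ is used, all of which give $\kappa$-dependence $\Theta(\sqrt\kappa\log(1/\varepsilon))$ in iteration complexity.

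The main obstacle is purely expository: making the constants in the strongly convex bound line up with the exact form $\tfrac{L+\mu}{2}R^2\bigl(\tfrac{\sqrt\kappa-1}{\sqrt\kappa+1}\bigr)^{2T}$, since different textbooks normalize the accelerated scheme differently (some state $(1-\sqrt{\mu/L})^T$, some $\bigl(\tfrac{1-\sqrt{\mu/L}}{1+\sqrt{\mu/L}}\bigr)^{2T}$, with correspondingly different leading constants). Since this lemma is used only to feed Lemma~\ref{lem:descent} and produce matching \emph{orders} $\mathcal{O}(LR/(T+1))$ and $\mathcal{O}\!\bigl(\sqrt{L}\,R\,\bigl(\tfrac{\sqrt\kappa-1}{\sqrt\kappa+1}\bigr)^{T}\bigr)$ for the gradient norm, I would pick one reference's normalization, cite it precisely, and remark that any of the standard forms suffices for the rate comparison in Section~\ref{sec:upper}.
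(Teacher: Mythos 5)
Your convex-case argument is fine: the FISTA potential/estimate-sequence analysis with $t_{k+1}=\tfrac{1+\sqrt{1+4t_k^2}}{2}$, the identity $t_{k+1}^2-t_{k+1}=t_k^2$, and $t_{k}\ge (k+2)/2$ does give exactly $f(x^{(T)})-f^\star\le 2LR^2/(T+1)^2$, which is also all the paper does (it states the lemma "for completeness" and cites \cite{BeckTeboulle2009,Nesterov04,Bubeck2015} without proof). So for the first display your route and the paper's coincide.

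The strongly convex half has a genuine gap, and you half-noticed it yourself. From Nesterov's constant-scheme theorem you correctly get $f(x^{(T)})-f^\star\le\bigl(f(x^{(0)})-f^\star+\tfrac{\mu}{2}R^2\bigr)\bigl(1-\sqrt{\mu/L}\bigr)^{T}\le\tfrac{L+\mu}{2}R^2\bigl(1-\tfrac{1}{\sqrt{\kappa}}\bigr)^{T}$, but your fallback --- "quote the sharper form that directly produces $\bigl(\tfrac{\sqrt{\kappa}-1}{\sqrt{\kappa}+1}\bigr)^{2T}$, which is the version stated e.g.\ in \cite{Bubeck2015}" --- does not exist: Bubeck (\S3.7.2) gives $\tfrac{\mu+L}{2}R^2\exp\bigl(-\tfrac{T-1}{\sqrt{\kappa}}\bigr)$, which is \emph{weaker}, not stronger, than the $(1-1/\sqrt{\kappa})^T$ form. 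Moreover the conversion you need goes the wrong way: with $\beta=\tfrac{\sqrt{\kappa}-1}{\sqrt{\kappa}+1}$ one has $\beta^{2}=\tfrac{(\sqrt{\kappa}-1)^2}{(\sqrt{\kappa}+1)^2}\le\tfrac{\sqrt{\kappa}-1}{\sqrt{\kappa}}=1-\tfrac{1}{\sqrt{\kappa}}$, so $\beta^{2T}\le(1-1/\sqrt{\kappa})^T$ and the lemma's second display is strictly \emph{stronger} than what any of the cited estimate-sequence analyses deliver (it is the Chebyshev/lower-bound rate, not the standard AGD guarantee for the scheme of Definition~\ref{def:agd}). So your proposal does not actually prove the second display as stated --- and, to be fair, neither do the paper's citations; the discrepancy you stumbled on is real. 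The honest fix, for you and for the text, is to state the strongly convex bound as $\tfrac{L+\mu}{2}R^2\bigl(1-\sqrt{\mu/L}\bigr)^{T}$ (or the equivalent exponential form); this still yields the $\Theta\bigl(\sqrt{\kappa}\log(1/\varepsilon)\bigr)$ complexity used in Theorem~\ref{thm:upper-bound-agd} and Corollary~\ref{cor:agd-eps}, but the exact base $\bigl(\tfrac{\sqrt{\kappa}-1}{\sqrt{\kappa}+1}\bigr)$ appearing downstream would have to be adjusted accordingly rather than waved through.
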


\smallskip
With these classical convergence results in hand, we can now establish upper bounds for the Pareto stationarity gap. The strategy is straightforward: we apply AGD to a fixed scalarization $f_{\lambda}$ and leverage the fact that, by definition, the Pareto stationarity gap $\cG(x)$ is bounded above by the norm of the gradient of any scalarized objective, i.e., $\cG(x) \le ||\nabla f_{\lambda}(x)||$. This yields a direct path from the scalar convergence rates to worst-case guarantees for the multiobjective problem.

\smallskip
\begin{theorem}[Upper bounds via AGD on a fixed scalarization]\label{thm:upper-bound-agd}
Assume each $f_i$ is convex and $L$-smooth. Fix any $\lambda\in\Delta^m$ and run AGD (Definition~\ref{def:agd}) on $f_\lambda$ for $T$ steps from $x^{(0)}$. Then
\[
\mathcal{G}\big(x^{(T)}\big) \;\le\; \|\nabla f_\lambda(x^{(T)})\| \;\le\; \frac{2LR_\lambda}{T+1}.
\]
If, moreover, each $f_i$ is $\mu$-strongly convex (so $f_\lambda$ is $\mu$-strongly convex with $\kappa=L/\mu$), then
\[
\mathcal{G}\big(x^{(T)}\big) \;\le\; \|\nabla f_\lambda(x^{(T)})\|
\;\le\; \sqrt{\,L(L+\mu)\,}\;R_\lambda\;
\Bigg(\frac{\sqrt{\kappa}-1}{\sqrt{\kappa}+1}\Bigg)^{\!T}.
\]
\end{theorem}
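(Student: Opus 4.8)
The plan is to chain together three ingredients already available in the excerpt: the elementary bound $\cG(x)\le\|\nabla f_\lambda(x)\|$ (valid for every $\lambda\in\Delta^m$ since $\nabla f_\lambda(x)=\sum_i\lambda_i\nabla f_i(x)$ is one particular point in the convex hull of the gradients), the descent inequality $\|\nabla f(x)\|^2\le 2L(f(x)-f^\star)$ from Lemma~\ref{lem:descent}, and the AGD function-value rates from Lemma~\ref{lem:agd-rate}. First I would note that $f_\lambda$ is $L$-smooth and convex (and $\mu$-strongly convex in the second case) as a convex combination of such functions, so both $f_\lambda$ and its minimizer $x_\lambda^\star$ are well defined, and $R_\lambda=\|x^{(0)}-x_\lambda^\star\|$ is the relevant radius for the AGD guarantees applied to $f_\lambda$.

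For the convex case, I would apply Lemma~\ref{lem:agd-rate} to $f=f_\lambda$ to get $f_\lambda(x^{(T)})-f_\lambda(x_\lambda^\star)\le 2LR_\lambda^2/(T+1)^2$, then feed this into Lemma~\ref{lem:descent} to obtain
\[
\|\nabla f_\lambda(x^{(T)})\|^2 \;\le\; 2L\cdot\frac{2LR_\lambda^2}{(T+1)^2} \;=\; \frac{4L^2R_\lambda^2}{(T+1)^2},
\]
and taking square roots gives $\|\nabla f_\lambda(x^{(T)})\|\le 2LR_\lambda/(T+1)$. Combining with $\cG(x^{(T)})\le\|\nabla f_\lambda(x^{(T)})\|$ yields the first claim.

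For the strongly convex case, the argument is identical in structure: Lemma~\ref{lem:agd-rate} gives $f_\lambda(x^{(T)})-f_\lambda(x_\lambda^\star)\le \frac{L+\mu}{2}R_\lambda^2\left(\frac{\sqrt\kappa-1}{\sqrt\kappa+1}\right)^{2T}$, and Lemma~\ref{lem:descent} then yields
\[
\|\nabla f_\lambda(x^{(T)})\|^2 \;\le\; 2L\cdot\frac{L+\mu}{2}R_\lambda^2\left(\frac{\sqrt\kappa-1}{\sqrt\kappa+1}\right)^{2T} \;=\; L(L+\mu)R_\lambda^2\left(\frac{\sqrt\kappa-1}{\sqrt\kappa+1}\right)^{2T};
\]
taking square roots and using $\cG\le\|\nabla f_\lambda\|$ again gives the second claim. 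I do not anticipate a genuine obstacle here — the statement is a direct composition of cited lemmas — but the one point to be careful about is that the AGD rates in Lemma~\ref{lem:agd-rate} are stated with $R=\|x^{(0)}-x^\star\|$ for the \emph{scalarized} problem, so I must consistently use $R_\lambda$ (not the Pareto-set radius $R$) throughout; the relation $R_\lambda\ge R$ is what makes this an honest upper bound comparable to the lower bounds, and it is worth a one-line remark that on the hard instances one can pick $\lambda$ with $R_\lambda=R$, matching the orders.
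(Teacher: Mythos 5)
Your proposal is correct and follows essentially the same route as the paper's proof: bound $\cG(x^{(T)})$ by $\|\nabla f_\lambda(x^{(T)})\|$, invoke Lemma~\ref{lem:agd-rate} for the function-value rate of AGD on $f_\lambda$, convert via Lemma~\ref{lem:descent}, and take square roots, using $R_\lambda$ throughout. No gaps.
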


\begin{proof}
For any $x$, $\mathcal{G}(x)\le \|\nabla f_\lambda(x)\|$ by definition of the Pareto stationarity gap. In the convex case, combine Lemma~\ref{lem:descent} with Lemma~\ref{lem:agd-rate} applied to $f_\lambda$:
\[
\|\nabla f_\lambda(x^{(T)})\|^2 \;\le\; 2L\big(f_\lambda(x^{(T)})-f_\lambda(x_\lambda^\star)\big)
\;\le\; 2L\cdot \frac{2L R_\lambda^2}{(T+1)^2}
\;=\; \frac{4L^2 R_\lambda^2}{(T+1)^2},
\]
and take square roots. In the strongly convex case, apply the same inequality $\|\nabla f_\lambda(x)\|^2 \le 2L(f_\lambda(x)-f_\lambda^\star)$ together with the strongly convex part of Lemma~\ref{lem:agd-rate}:
\[
\|\nabla f_\lambda(x^{(T)})\|^2 \;\le\; 2L\cdot \frac{L+\mu}{2}\,R_\lambda^2\,
\Bigg(\frac{\sqrt{\kappa}-1}{\sqrt{\kappa}+1}\Bigg)^{\!2T}
\;=\; L(L+\mu)\,R_\lambda^2\,
\Bigg(\frac{\sqrt{\kappa}-1}{\sqrt{\kappa}+1}\Bigg)^{\!2T},
\]
and take square roots to obtain the stated bound. Note that AGD is an oblivious span method (but not an oblivious one-step method). Hence an $\mathcal{O}(1/T)$ upper bound in the convex case is achievable by an \emph{oblivious span} method on a fixed scalarization; the linear bound in the strongly convex case follows analogously with the classical accelerated factor.
\end{proof}

\smallskip
\begin{corollary}[AGD $\varepsilon$-complexity for the Pareto gap]\label{cor:agd-eps}
Under the assumptions of Theorem~\ref{thm:upper-bound-agd} (convex case), to guarantee $\mathcal{G}(x^{(T)})\le \varepsilon$ it suffices to take
\[
T \ \ge\ \left\lceil \frac{2 L R_\lambda}{\varepsilon} \right\rceil - 1.
\]
In the $\mu$-strongly convex case with $\kappa=L/\mu$, it suffices to take
\[
T \ \ge\ \left\lceil \frac{1}{\ln\!\big(\tfrac{\sqrt{\kappa}+1}{\sqrt{\kappa}-1}\big)}\;
\ln\!\Big(\frac{\sqrt{L(L+\mu)}\,R_\lambda}{\varepsilon}\Big)\right\rceil,
\]
that is, $T=\Theta\!\big(\sqrt{\kappa}\,\log(\sqrt{L(L+\mu)}\,R_\lambda/\varepsilon)\big)$.
\end{corollary}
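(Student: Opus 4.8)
The plan is to invert the two convergence estimates of Theorem~\ref{thm:upper-bound-agd}; the only work is the rounding to an integer iteration count and the asymptotics of the linear factor as $\kappa\to\infty$, and I do not expect any genuine obstacle beyond this bookkeeping. For the convex case, Theorem~\ref{thm:upper-bound-agd} gives $\cG(x^{(T)})\le 2LR_\lambda/(T+1)$, a quantity decreasing in $T$; hence it suffices to pick $T$ with $2LR_\lambda/(T+1)\le\varepsilon$, i.e.\ $T+1\ge 2LR_\lambda/\varepsilon$. Since $T+1$ is a positive integer, the smallest admissible value is $T+1=\lceil 2LR_\lambda/\varepsilon\rceil$, giving $T=\lceil 2LR_\lambda/\varepsilon\rceil-1$, and any larger $T$ works as well.

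For the strongly convex case, write $\beta:=\frac{\sqrt\kappa-1}{\sqrt\kappa+1}\in(0,1)$, so that Theorem~\ref{thm:upper-bound-agd} yields $\cG(x^{(T)})\le \sqrt{L(L+\mu)}\,R_\lambda\,\beta^{T}$. Imposing $\sqrt{L(L+\mu)}\,R_\lambda\,\beta^{T}\le\varepsilon$ and taking logarithms, while noting $\ln(1/\beta)=\ln\!\big(\tfrac{\sqrt\kappa+1}{\sqrt\kappa-1}\big)>0$, this is equivalent to
\[
T\ \ge\ \frac{1}{\ln\!\big(\tfrac{\sqrt\kappa+1}{\sqrt\kappa-1}\big)}\,\ln\!\Big(\frac{\sqrt{L(L+\mu)}\,R_\lambda}{\varepsilon}\Big),
\]
and taking the ceiling of the right-hand side produces an admissible integer, which is exactly the claimed threshold.

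Finally, to read off the $\Theta\!\big(\sqrt\kappa\,\log(\sqrt{L(L+\mu)}\,R_\lambda/\varepsilon)\big)$ form, I would estimate $1/\ln\!\big(\tfrac{\sqrt\kappa+1}{\sqrt\kappa-1}\big)$ via the elementary identity $\ln\frac{1+u}{1-u}=2\sum_{j\ge0}\frac{u^{2j+1}}{2j+1}$ with $u:=1/\sqrt\kappa\in(0,1]$: bounding below by the first term and above by the geometric majorant $2u/(1-u^2)$ gives
\[
\frac{2}{\sqrt\kappa}\ \le\ \ln\frac{\sqrt\kappa+1}{\sqrt\kappa-1}\ \le\ \frac{2\sqrt\kappa}{\kappa-1},
\qquad\text{hence}\qquad
\frac{\kappa-1}{2\sqrt\kappa}\ \le\ \frac{1}{\ln\!\big(\tfrac{\sqrt\kappa+1}{\sqrt\kappa-1}\big)}\ \le\ \frac{\sqrt\kappa}{2}.
\]
Both bounds are of order $\sqrt\kappa$ once $\kappa$ is bounded away from $1$, so combined with the previous display the iteration count is $\Theta\!\big(\sqrt\kappa\,\log(\sqrt{L(L+\mu)}\,R_\lambda/\varepsilon)\big)$. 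The only mildly delicate point is that the lower estimate degenerates as $\kappa\to1^+$, precisely the regime where the linear rate itself becomes vacuous, so the $\Theta$ statement is to be understood for condition numbers bounded away from $1$.
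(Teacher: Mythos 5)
Your proposal is correct and follows the same (and only natural) route the paper intends: the corollary is stated without a written proof precisely because it is the direct inversion of the two bounds in Theorem~\ref{thm:upper-bound-agd}, with the integer rounding handled exactly as you do. Your explicit series estimate $\frac{2}{\sqrt{\kappa}}\le\ln\frac{\sqrt{\kappa}+1}{\sqrt{\kappa}-1}\le\frac{2\sqrt{\kappa}}{\kappa-1}$ justifying the $\Theta(\sqrt{\kappa}\log(\cdot))$ reading, together with the caveat about $\kappa\to1^+$, is a correct and slightly more careful treatment than the paper makes explicit.
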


\smallskip
\begin{remark}[Tighter scalar upper bounds via gradient-minimization methods]\label{rem:lan-oyz}
The bounds in Theorem~5.4--Corollary~5.5 are obtained by applying AGD to a fixed
scalarization $f_\lambda$ and then translating function-value convergence into a gradient-norm
bound via Lemma~5.1 (using that $G(x)\le \|\nabla f_\lambda(x)\|$).
If one is specifically interested in \emph{gradient-norm} complexity for the scalar problem
$\min_x f_\lambda(x)$, then tighter results are available in the single-objective literature.

In particular, Lan, Ouyang and Zhang~\cite{lan2023gradientmin} develop optimal (and
parameter-free) first-order methods for computing $\hat x$ with small (projected) gradient norm.
Specializing to our unconstrained setting (where projected gradient reduces to the gradient),
their results imply that for convex $L$-smooth $f_\lambda$ one can obtain
$\|\nabla f_\lambda(\hat x)\|\le \varepsilon$ within $\mathcal{O}\!\left(\sqrt{L}\,R_\lambda/\varepsilon\right)$
gradient evaluations, where $R_\lambda=\|x^{(0)}-x^\star_\lambda\|$.
In the $\mu$-strongly convex case, they obtain
$\mathcal{O}\!\left(\sqrt{L/\mu}\,\log(\|\nabla f_\lambda(x^{(0)})\|/\varepsilon)\right)$
gradient evaluations.
Combining these guarantees with $G(x)\le \|\nabla f_\lambda(x)\|$ yields the corresponding
Pareto-gap upper bounds for the multiobjective problem when optimizing a fixed scalarization.
\end{remark}

\smallskip
\begin{remark}[Best known one-step upper bound]\label{rem:onestep}
For $L$-smooth convex $f_\lambda$ and stepsizes $\alpha_t\le 1/L$, gradient descent satisfies
\[
\sum_{t=0}^{T-1}\|\nabla f_\lambda(x^{(t)})\|^2 \;\le\; 2L\big(f_\lambda(x^{(0)})-f_\lambda^\star\big)
\;\le\; L^2 R_\lambda^2,
\]
whence $\min_{0\le t<T}\|\nabla f_\lambda(x^{(t)})\|\le LR_\lambda/\sqrt{T}$. Since $\mathcal{G}(x)\le \|\nabla f_\lambda(x)\|$, we obtain $\min_{0\le t<T}\mathcal{G}(x^{(t)})\le LR_\lambda/\sqrt{T}$ for the oblivious one-step class.
\end{remark}

\smallskip
\begin{remark}[Instance calibration and rate comparison]\label{rem:calibration}
The upper bounds above depend on $R_\lambda=\|x^{(0)}-x_\lambda^\star\|$, whereas our lower bounds depend on $R=\mathrm{dist}(x^{(0)},\mathcal{P})$. Always $R\le R_\lambda$. For the hard instances in Theorem~\ref{thm:lifting}, choosing $\lambda=e_1$ yields $R_{e_1}=R$, so the orders match. Importantly, the $\Omega(1/T)$ lower bound in Theorem~\ref{thm:oblivious-lower}$\,$holds for the \emph{oblivious one-step} class, whereas the $\mathcal{O}(1/T)$ upper bound here is achieved by an \emph{oblivious span} method (AGD). Whether $\mathcal{O}(1/T)$ is achievable within the oblivious one-step class remains open; the best known upper bound there is $\mathcal{O}(LR/\sqrt{T})$.
\end{remark}

\section{Conclusion}\label{sec:conclusions}

We developed an oracle-complexity theory for finding $\varepsilon$-Pareto stationary points in smooth multiobjective optimization, measured by the Pareto stationarity gap $\cG(x)$. A central tool is a robust non-degenerate lifting construction that embeds scalar hard instances into MOO instances with distinct objectives and non-singleton Pareto fronts while preserving lower bounds on $\cG$. 

In the $\mu$-strongly convex regime we obtained a tight linear-rate lower bound for span methods matching accelerated upper bounds. In the convex regime we separated algorithmic power: oblivious one-step methods admit an $\Omega(1/T)$ min-iterate lower bound, while oblivious span methods satisfy a universal polynomial-degree $\Omega(1/T^2)$ last-iterate bound. We then showed that the polynomial-degree limitation is loose: by lifting the geometric stationary-point lower bounds of Carmon--Duchi--Hinder--Sidford, we proved an $\Omega(\sqrt{L\Delta}/T)$ min-iterate lower bound for adaptive first-order methods under bounded scalarization gap, recovering $\Omega(LR/T)$ on our lifted family. Finally, leveraging that the lifting does not require convexity of the embedded function, we extended the framework to nonconvex MOO under Lipschitz gradients, obtaining the tight-in-order $\Omega(\sqrt{L\Delta/T})$ complexity.

Several directions remain open. Most prominently, the last-iterate complexity of adaptive first-order methods in convex MOO under bounded scalarization gap (or under geometric distance models) is not yet fully characterized. Extending the framework to randomized methods, stochastic oracles, constraints/composite objectives, and refined dimension-dependent regimes are further natural directions for a comprehensive complexity theory of Pareto stationarity.

\appendix
\renewcommand{\thesection}{\Alph{section}} 
\renewcommand{\thesubsection}{\thesection.\arabic{subsection}} 
\renewcommand{\thetheorem}{\thesubsection.\arabic{theorem}} 
\makeatletter
\@addtoreset{theorem}{subsection}
\makeatother

\section{Derivations of Polynomial Extremal Results}

\subsection{A tight \texorpdfstring{$\Omega(1/T)$}{Omega(1/T)} lower bound for oblivious one-step methods (with $L$-capped steps)}\label{app:prod-poly-tight}

In this appendix we establish a tight $\Omega(1/T)$ lower bound for the stationarity\hyp{}gap proxy
\[
\max_{\zeta\in[0,L]}\;\zeta\;\prod_{k=0}^{t-1}\bigl(1-\alpha_k\,\zeta\bigr),
\]
which arises in the analysis of oblivious one-step methods on $L$-smooth convex quadratics. The proof is self-contained and uses only elementary inequalities.

\paragraph{Model and class}
We consider \emph{oblivious one-step gradient methods} applied to a fixed scalarization $g(x)=\tfrac12 x^\top H x - b^\top x$ of a multiobjective instance, where $H\succeq 0$ and $\sigma(H)\subset[0,L]$.
The method generates
\[
x^{(t+1)} \;=\; x^{(t)} - \alpha_t \,\nabla g\bigl(x^{(t)}\bigr),\qquad t=0,1,2,\dots,
\]
with a \emph{pre-scheduled} stepsize sequence $\{\alpha_t\}_{t\ge 0}$ which may depend on known problem data (here, $L$) but not on oracle feedback. We adopt the natural $L$-cap:
\begin{equation}\label{eq:alpha-cap}
0 \;\le\; \alpha_t \;\le\; \frac{1}{L}\qquad\text{for all }t\ge 0.
\end{equation}
This is the standard safe region for gradient descent on $L$-smooth convex quadratics with known $L$ and is the relevant oblivious regime for worst-case analysis.

\paragraph{Polynomial representation}
Let $e^{(t)}=x^{(t)}-x^\star$, where $x^\star\in\arg\min g$; then for quadratics $e^{(t)}=p_t(H)\,e^{(0)}$ with
\[
p_t(\zeta)\;=\;\prod_{k=0}^{t-1}\bigl(1-\alpha_k\,\zeta\bigr),\qquad p_0\equiv 1,
\]
and hence $\nabla g\bigl(x^{(t)}\bigr)=H\,p_t(H)\,e^{(0)}$ (Lemma~\ref{lem:krylov}).

We prove the following extremal result.

\begin{theorem}[Tight $\Omega(1/T)$ product-form extremal under $L$-capped steps]\label{thm:prod-omega-one-over-T}
Fix $L>0$ and $t\in\mathbb{N}$. For any nonnegative stepsizes $\alpha_0,\ldots,\alpha_{t-1}$ satisfying \eqref{eq:alpha-cap}, we have
\begin{equation}\label{eq:prod-lb}
\max_{\zeta\in[0,L]}\;\zeta\;\prod_{k=0}^{t-1}\bigl(1-\alpha_k\,\zeta\bigr)
\;\;\ge\;\; \frac{L}{4\,(t+1)}.
\end{equation}
Moreover, the bound is tight up to a universal constant: the constant stepsizes $\alpha_k\equiv 1/L$ yield
\begin{equation}\label{eq:prod-ub}
\max_{\zeta\in[0,L]}\;\zeta\;\prod_{k=0}^{t-1}\bigl(1-\alpha_k\,\zeta\bigr)
\;=\; \max_{\zeta\in[0,L]}\;\zeta\Bigl(1-\frac{\zeta}{L}\Bigr)^{t}
\;\;\le\;\; \frac{L}{e\,(t+1)}.
\end{equation}
\end{theorem}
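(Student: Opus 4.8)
The plan is to prove the two displayed inequalities of Theorem~\ref{thm:prod-omega-one-over-T} separately, using only elementary tools: the Weierstrass product inequality for the lower bound \eqref{eq:prod-lb}, and a one-variable optimization for the tightness claim \eqref{eq:prod-ub}.

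For \eqref{eq:prod-lb}, the first observation I would make is that the $L$-cap \eqref{eq:alpha-cap} forces every factor $1-\alpha_k\zeta$ to lie in $[0,1]$ for $\zeta\in[0,L]$; hence the product $p_t(\zeta):=\prod_{k=0}^{t-1}(1-\alpha_k\zeta)$ is nonnegative on $[0,L]$, and the Weierstrass inequality $\prod_k(1-x_k)\ge 1-\sum_k x_k$ (for $x_k\in[0,1]$, proved by a one-line induction) gives
\[
\zeta\,p_t(\zeta)\;\ge\;\zeta\bigl(1-a\,\zeta\bigr)\qquad\text{whenever }1-a\zeta\ge 0,
\]
where $a:=\sum_{k=0}^{t-1}\alpha_k\in[0,\,t/L]$. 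The heart of the argument is then a dichotomy on the total step mass $a$. If $aL\le \tfrac12$, I evaluate at $\zeta=L$: since $1-aL\ge\tfrac12$ we get $L\,p_t(L)\ge L(1-aL)\ge L/2\ge L/\bigl(4(t+1)\bigr)$. If instead $aL>\tfrac12$, then $\zeta^\star:=1/(2a)$ satisfies $0<\zeta^\star<L$ and $1-a\zeta^\star=\tfrac12$, so $\zeta^\star p_t(\zeta^\star)\ge \tfrac1{2a}\cdot\tfrac12=\tfrac1{4a}\ge \tfrac{L}{4t}\ge \tfrac{L}{4(t+1)}$, using $a\le t/L$. In both cases the claimed bound holds with the same constant.

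For \eqref{eq:prod-ub}, with $\alpha_k\equiv 1/L$ the quantity becomes $\zeta(1-\zeta/L)^t$; substituting $u=\zeta/L\in[0,1]$ reduces the problem to maximizing $h(u)=u(1-u)^t$ on $[0,1]$. Since $h'(u)=(1-u)^{t-1}\bigl(1-(t+1)u\bigr)$, the unique interior maximizer is $u^\star=1/(t+1)$, so
\[
\max_{\zeta\in[0,L]}\zeta(1-\zeta/L)^t=\frac{L}{t+1}\Bigl(\frac{t}{t+1}\Bigr)^t ,
\]
which is $\Theta(L/t)$. The clean closed-form upper bound comes from applying $1-x\le e^{-x}$ before optimizing: $\zeta(1-\zeta/L)^t\le \zeta e^{-t\zeta/L}$, whose maximum over $\zeta\ge0$ is $L/(et)$ (attained at $\zeta=L/t\le L$). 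Combined with \eqref{eq:prod-lb} this exhibits the stated tightness up to a universal multiplicative constant.

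I expect the only genuine subtlety to be pinning down a constant in \eqref{eq:prod-lb} that does \emph{not} degrade with $t$: a non-adaptive estimate (a single fixed evaluation point, or bounding $p_t$ by $e^{-a\zeta}$) loses a factor growing in $t$, so the dichotomy at $aL=\tfrac12$ — choosing $\zeta$ as either $L$ or $1/(2a)$ depending on the total step mass — is precisely the device that yields a uniform constant. One also has to check that the chosen $\zeta$ keeps every factor in $[0,1]$ so that the Weierstrass step is legitimate; this is exactly where \eqref{eq:alpha-cap} is used, and without it the product may change sign on $[0,L]$ and the whole approach breaks.
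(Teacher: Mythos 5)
For the main inequality \eqref{eq:prod-lb} your argument is correct and is essentially the paper's own proof: the same Weierstrass product bound $\prod_k(1-x_k)\ge 1-\sum_k x_k$ (the paper's Lemma on products), the same reduction to $\zeta(1-a\zeta)$ with $a=\sum_k\alpha_k\le t/L$, and the same dichotomy on the total step mass at $aL=\tfrac12$ with evaluation at $\zeta=L$ or $\zeta=1/(2a)$; the cap \eqref{eq:alpha-cap} is used in exactly the same place to keep every factor in $[0,1]$.

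On the tightness claim \eqref{eq:prod-ub} your route diverges, and the divergence is instructive. You compute the exact maximum
\[
\max_{\zeta\in[0,L]}\zeta\Bigl(1-\tfrac{\zeta}{L}\Bigr)^{t}
=\frac{L}{t+1}\Bigl(\frac{t}{t+1}\Bigr)^{t}
=\frac{L}{t+1}\Bigl(1+\tfrac1t\Bigr)^{-t},
\]
and then bound it by $L/(e\,t)$ via $1-x\le e^{-x}$. That is correct, but note it does \emph{not} give the constant stated in \eqref{eq:prod-ub}: since $(1+1/t)^{t}<e$ strictly, the exact maximum is strictly larger than $L/(e(t+1))$ for every finite $t$ (at $t=1$ the maximum is $L/4$, whereas $L/(2e)\approx 0.18\,L$), so the inequality \eqref{eq:prod-ub} is false as written. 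The paper's own derivation slips at its final step, where it asserts $e^{-t/(t+1)}\le e^{-1}$; that inequality goes the wrong way. Your exact evaluation actually exposes this, and your weaker-but-correct bound $L/(e\,t)$ (or simply $L/(t+1)$, since $(t/(t+1))^t\le 1$) is the right repair; either version still differs from the lower bound $L/(4(t+1))$ only by a universal constant, so the ``tight up to a constant'' conclusion and all downstream uses of the theorem are unaffected. If you present this, state the tightness bound as $L/(e\,t)$ or as the exact value above rather than as $L/(e(t+1))$.
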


\begin{remark}[Scope of \eqref{eq:alpha-cap}]
The cap \eqref{eq:alpha-cap} is standard in first-order analysis with known $L$. Allowing stepsizes larger than $1/L$ may cause ascent on $L$-smooth convex quadratics and does not help the algorithm in the worst case. Our lower bound therefore covers the canonical oblivious regime.
\end{remark}

The proof rests on a simple but sharp product inequality.

\begin{lemma}[A product lower bound]\label{lem:prod-ineq}
For any $r\in\mathbb{N}$ and any numbers $x_1,\ldots,x_r\in[0,1]$,
\begin{equation}\label{eq:prod-ineq}
\prod_{i=1}^{r}\bigl(1-x_i\bigr)\;\;\ge\;\; 1-\sum_{i=1}^{r} x_i.
\end{equation}
\end{lemma}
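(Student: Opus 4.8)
The plan is to prove the inequality \eqref{eq:prod-ineq} by induction on the number of factors $r$, using throughout the fact that every factor $1-x_i$ is nonnegative (since $x_i\in[0,1]$), so that all partial products are automatically nonnegative. For the base case $r=1$ the claim reads $1-x_1\ge 1-x_1$, which holds with equality, so there is nothing to do.

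For the inductive step I would assume \eqref{eq:prod-ineq} holds for some $r\ge 1$ and all $x_1,\dots,x_r\in[0,1]$, fix $x_1,\dots,x_{r+1}\in[0,1]$, and abbreviate $P_r:=\prod_{i=1}^r(1-x_i)$ and $S_r:=\sum_{i=1}^r x_i$, so that $P_r\ge 0$ and $1-x_{r+1}\ge 0$. The argument then splits on the sign of the target right-hand side $1-S_{r+1}$, where $S_{r+1}=S_r+x_{r+1}$. If $1-S_{r+1}\le 0$, then trivially $\prod_{i=1}^{r+1}(1-x_i)=(1-x_{r+1})P_r\ge 0\ge 1-S_{r+1}$. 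If instead $1-S_{r+1}>0$, then in particular $1-S_r>x_{r+1}\ge 0$, so the induction hypothesis $P_r\ge 1-S_r$ has a nonnegative right-hand side and may be multiplied by the nonnegative number $1-x_{r+1}$ to obtain
\[
(1-x_{r+1})P_r\;\ge\;(1-x_{r+1})(1-S_r)\;=\;1-S_r-x_{r+1}+x_{r+1}S_r\;\ge\;1-S_{r+1},
\]
where the final inequality uses $x_{r+1}S_r\ge 0$. This closes the induction and proves the lemma.

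The only point requiring care — and the one I expect to be the lone (mild) obstacle — is the sign of $1-S_r$ in the inductive step: one is not entitled to multiply the inductive inequality through by $1-x_{r+1}$ when its right-hand side is negative, since that could reverse nothing but still yield a useless bound. The remedy is exactly the case split above: when $1-S_{r+1}\le 0$ the nonnegativity of the product makes the estimate immediate, and otherwise $1-S_r$ is positive and the multiplication is legitimate. Beyond this observation the computation is entirely elementary, so no further difficulty is anticipated.
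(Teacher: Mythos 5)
Your proof is correct and follows essentially the same route as the paper: induction on the number of factors, multiplying the inductive hypothesis by the nonnegative factor $1-x_{r+1}$, expanding, and discarding the nonnegative cross term $x_{r+1}S_r$. The case split on the sign of $1-S_{r+1}$ is harmless but unnecessary, since an inequality $a\ge b$ multiplied by a nonnegative number $c$ always yields $ca\ge cb$ regardless of the sign of $b$ (as $ca-cb=c(a-b)\ge 0$), which is exactly how the paper argues.
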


\begin{proof}
By induction on $r$. The case $r=1$ is trivial. Suppose \eqref{eq:prod-ineq} holds for $r-1$. Then for $r$ we have
\begin{align*}
\prod_{i=1}^{r}(1-x_i)
  &= \Bigl(\prod_{i=1}^{r-1}(1-x_i)\Bigr)(1-x_r) \\
  &\ge \Bigl(1-\sum_{i=1}^{r-1}x_i\Bigr)(1-x_r) \\
  &= 1-\sum_{i=1}^r x_i \;+\; x_r\sum_{i=1}^{r-1}x_i \\
  &\ge 1-\sum_{i=1}^r x_i.
\end{align*}
since $x_r\sum_{i=1}^{r-1}x_i\ge 0$.
\end{proof}

\begin{proof}[Proof of Theorem~\ref{thm:prod-omega-one-over-T}]
Fix $t \ge 1$ and a stepsize sequence $(\alpha_k)_{k=0}^{t-1}$ with $0 \le \alpha_k \le \tfrac{1}{L}$ for all $k$.
For any $\zeta\in[0,L]$, set $x_k:=\alpha_k\,\zeta\in[0,1]$. Applying Lemma~\ref{lem:prod-ineq} gives
\begin{equation}\label{eq:prod-linear}
\prod_{k=0}^{t-1}\bigl(1-\alpha_k\,\zeta\bigr)
\;\;\ge\;\; 1 - \zeta\sum_{k=0}^{t-1}\alpha_k.
\end{equation}
Define the partial sums $S_t := \sum_{k=0}^{t-1}\alpha_k$. Note that $S_t\le t\cdot (1/L) = t/L$ by \eqref{eq:alpha-cap}.

Consider the function $\Phi_t(\zeta):=\zeta\prod_{k=0}^{t-1}(1-\alpha_k\,\zeta)$ on $[0,L]$. By \eqref{eq:prod-linear},
\begin{equation}\label{eq:phi-lb}
\Phi_t(\zeta) \;\ge\; \zeta\bigl(1-\zeta S_t\bigr)
\qquad\text{for all }\zeta\in[0,L].
\end{equation}
Let $\zeta^\star := \min\{L,\; 1/(2S_t)\}$ (with the convention $1/0=+\infty$). We distinguish two exhaustive cases.

\smallskip
\emph{Case 1: $S_t \ge 1/(2L)$.} Then $\zeta^\star=1/(2S_t)\le L$, and hence, by \eqref{eq:phi-lb},
\[
\Phi_t(\zeta^\star) \;\ge\; \zeta^\star\Bigl(1-\zeta^\star S_t\Bigr)
\;=\; \frac{1}{2S_t}\cdot\Bigl(1-\tfrac12\Bigr)
\;=\; \frac{1}{4S_t}
\;\ge\; \frac{1}{4\,(t/L)} \;=\; \frac{L}{4t}.
\]

\smallskip
\emph{Case 2: $S_t < 1/(2L)$.} Then $\zeta^\star=L$ and, by \eqref{eq:phi-lb},
\[
\Phi_t(\zeta^\star) \;\ge\; L\bigl(1-L S_t\bigr)
\;\ge\; L\Bigl(1-\tfrac12\Bigr)
\;=\; \frac{L}{2}
\;\ge\; \frac{L}{4t}\qquad(\text{since }t\ge 1).
\]
In both cases, we obtain $\max_{\zeta\in[0,L]}\Phi_t(\zeta)\ge L/(4t)$. Since $t \ge 1$ implies $1/t \ge 1/(t+1)$, this establishes the slightly looser bound in \eqref{eq:prod-lb}.

For the near-matching \emph{upper} bound \eqref{eq:prod-ub}, take the constant stepsizes $\alpha_k\equiv 1/L$. Then
\[
\Psi_t(\zeta):=\zeta\Bigl(1-\frac{\zeta}{L}\Bigr)^{t},\qquad \zeta\in[0,L].
\]
Elementary calculus shows that $\Psi_t$ attains its maximum at $\zeta^\diamond=L/(t+1)$, with
\begin{align*}
\max_{\zeta\in[0,L]}\Psi_t(\zeta)
  &= \Psi_t(\zeta^\diamond) \\
  &= \frac{L}{t+1}\left(1-\frac{1}{t+1}\right)^{t} \\
  &\le \frac{L}{t+1}\,\mathrm{e}^{-\,t/(t+1)} \\
  &\le \frac{L}{\mathrm{e}\,(t+1)}.
\end{align*}
This establishes \eqref{eq:prod-ub}.
\end{proof}

We now translate Theorem~\ref{thm:prod-omega-one-over-T} into an oracle lower bound for the gradient norm.

\begin{corollary}[Oblivious one-step lower bound for the gradient norm]\label{cor:oblivious-gradient-lb}
Let $t\in\mathbb{N}$ and let an oblivious one-step method with stepsizes $\alpha_0,\ldots,\alpha_{t-1}$ obeying \eqref{eq:alpha-cap} be given. Then there exists an $L$-smooth convex quadratic $g(x)=\tfrac12 x^\top H x - b^\top x$ with $\sigma(H)\subset[0,L]$ and an initialization $x^{(0)}$ satisfying $\|x^{(0)}-x^\star\|=R$ such that
\[
\min_{0\le s\le t}\;\bigl\|\nabla g\bigl(x^{(s)}\bigr)\bigr\|
\;\;\ge\;\; \frac{L\,R}{4\,(t+1)}.
\]
\end{corollary}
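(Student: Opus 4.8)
The plan is to combine the product-form extremal result (Theorem~\ref{thm:prod-omega-one-over-T}) with the Krylov/polynomial representation of oblivious one-step methods (Lemma~\ref{lem:krylov}), reducing the multi-iterate gradient norm to a one-dimensional spectral quantity. First I would fix the given stepsize schedule $\alpha_0,\dots,\alpha_{t-1}$ (which, being oblivious, is determined before any oracle feedback, so it is legitimate to choose the instance afterwards). For each $s\le t$ we have the error polynomial $p_s(\zeta)=\prod_{k=0}^{s-1}(1-\alpha_k\zeta)$ with $p_0\equiv 1$, and by Lemma~\ref{lem:krylov}, $\nabla g(x^{(s)})=H\,p_s(H)\,e^{(0)}$ where $e^{(0)}=x^{(0)}-x^\star$.

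The key observation is that for a \emph{single} eigenvalue placement the gradient norms at \emph{all} iterates $0\le s\le t$ are controlled simultaneously, because the map $\zeta\mapsto \zeta\,p_s(\zeta)=\zeta\prod_{k=0}^{s-1}(1-\alpha_k\zeta)$ is nondecreasing in $s$ in absolute value at any fixed $\zeta\in[0,L]$? That is not literally true, so instead I would take the following route: let $\zeta^\star\in[0,L]$ be a maximizer of $\zeta\mapsto \zeta\,p_t(\zeta)$ as furnished by Theorem~\ref{thm:prod-omega-one-over-T}, i.e. $\zeta^\star\,p_t(\zeta^\star)\ge L/(4(t+1))$ in absolute value. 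Set $H$ to be the $1\times1$ matrix $[\zeta^\star]$ (embedded in any $d\ge T+1$ by padding with arbitrary eigenvalues in $[0,L]$, to respect the stated dimension requirement), choose $b$ and $x^{(0)}$ so that $e^{(0)}$ is aligned with the eigenvector for $\zeta^\star$ and $\|e^{(0)}\|=R$. Then $\|\nabla g(x^{(s)})\|=\zeta^\star\,|p_s(\zeta^\star)|\,R$ for every $s$. It remains to lower-bound $\min_{0\le s\le t}\zeta^\star|p_s(\zeta^\star)|$, not just the value at $s=t$.

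The main obstacle is precisely this last point: the quantity $\zeta^\star|p_s(\zeta^\star)|$ need not be monotone in $s$, so a maximizer for the degree-$t$ polynomial might be small for some intermediate degree. The clean fix is to note that the factors satisfy $0\le\alpha_k\zeta^\star\le \alpha_k L\le 1$, hence $0\le 1-\alpha_k\zeta^\star\le 1$, so $|p_s(\zeta^\star)|=\prod_{k=0}^{s-1}(1-\alpha_k\zeta^\star)$ is \emph{nonincreasing} in $s$; therefore $|p_s(\zeta^\star)|\ge |p_t(\zeta^\star)|$ for all $s\le t$, which gives $\zeta^\star|p_s(\zeta^\star)|\ge \zeta^\star|p_t(\zeta^\star)|\ge L/(4(t+1))$ uniformly over $s$. (Here I use crucially that all factors are in $[0,1]$, which is exactly where the $L$-cap~\eqref{eq:alpha-cap} enters.) Combining, $\min_{0\le s\le t}\|\nabla g(x^{(s)})\|\ge \tfrac{LR}{4(t+1)}$.

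Finally I would remark that the instance is genuinely $L$-smooth convex: $H\succeq 0$ with $\sigma(H)\subset[0,L]$, $b\in\operatorname{range}(H)$ so $g$ attains its minimum, and $\|x^{(0)}-x^\star\|=R$ by construction; padding the extra dimensions with eigenvalues in $[0,L]$ and setting the corresponding components of $e^{(0)}$ to zero leaves all iterate computations unaffected, so the bound is attained on an instance of the promised dimension. This completes the proof, and the ``lifting to MOO'' step (via Theorem~\ref{thm:non-singleton}) then yields the min-iterate statement of Theorem~\ref{thm:oblivious-lower}.
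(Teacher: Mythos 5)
Your proposal is correct and follows essentially the same route as the paper's proof: choose the instance after the oblivious schedule is fixed, place an eigenvalue of $H$ at a maximizer $\zeta^\star\in[0,L]$ of $\zeta\prod_{k=0}^{t-1}(1-\alpha_k\zeta)$ with the initial error aligned to that eigenvector, and use that the $1/L$ cap forces every factor $1-\alpha_k\zeta^\star$ into $[0,1]$, so $\zeta^\star|p_s(\zeta^\star)|$ is nonincreasing in $s$ and the minimum over $0\le s\le t$ is attained at $s=t$, where Theorem~\ref{thm:prod-omega-one-over-T} gives the $L R/(4(t+1))$ bound. The self-correction you make about monotonicity lands exactly on the observation the paper uses, so there is no substantive difference.
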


\begin{proof}
For quadratics, $\|\nabla g(x^{(s)})\|=\|H\,p_s(H)\,(x^{(0)}-x^\star)\|$. Fix any unit vector $u$ and take $x^{(0)}-x^\star=R\,u$.
For every fixed $\zeta\in[0,L]$, the monotonicity $(1-\alpha_s\zeta)\in[0,1]$ implies
\[
\zeta\,\Bigl|\prod_{k=0}^{s-1}\bigl(1-\alpha_k\zeta\bigr)\Bigr|
\;\;\text{is nonincreasing in $s$.}
\]
Hence, for any fixed $\zeta$,
\[
\min_{0\le s\le t}\ \zeta\,\Bigl|\prod_{k=0}^{s-1}\bigl(1-\alpha_k\zeta\bigr)\Bigr|
\;=\; \zeta\,\prod_{k=0}^{t-1}\bigl(1-\alpha_k\zeta\bigr).
\]
Choosing $H$ to be diagonal with an eigenvalue at the maximizer $\zeta^\star\in[0,L]$ of the right-hand side and aligning $u$ with the corresponding eigenvector yields
\[
\min_{0\le s\le t}\;\|\nabla g(x^{(s)})\|
\;=\; R\cdot \max_{\zeta\in[0,L]}\;\zeta\;\prod_{k=0}^{t-1}\bigl(1-\alpha_k\,\zeta\bigr)
\;\ge\; \frac{L\,R}{4\,(t+1)},
\]
by Theorem~\ref{thm:prod-omega-one-over-T}.
\end{proof}

This result is lifted to the multiobjective setting using Theorem~\ref{thm:lifting}: for the instance $f_i(x)=g(x_V)+\frac{L}{2}\|x_W-a_i\|^2$, we have $\cG(x)\ge \|\nabla g(x_V)\|$ and $\mathrm{dist}(x^{(0)},\mathcal{P})=R$. Corollary~\ref{cor:oblivious-gradient-lb} thus directly implies a lower bound on $\min_{s\le t}\cG(x^{(s)})$. This establishes the bound for a non-degenerate MOO instance with distinct objectives and a non-singleton Pareto set.

\subsection{Universal Markov-Based Bound for Oblivious Span Methods}\label{app:markov}
The lower bound for oblivious methods relies on $\inf_{p_t} \max_{\zeta \in [0,L]} |\zeta p_t(\zeta)|$ where $p_t$ is any polynomial with $\deg p_t \le t$ and $p_t(0)=1$.
\begin{lemma}[Markov Brothers' Inequality (see \cite{Rivlin74}, Chapter 2, Section 2.7, Page 123)]
For any polynomial $q(x)$ of degree $n$,
\[
\max_{x\in[-1,1]} |q'(x)| \le n^2 \max_{x\in[-1,1]} |q(x)|.
\]
\end{lemma}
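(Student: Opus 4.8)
The plan is as follows. The Markov brothers' inequality is a classical sharp result --- equality holds for the Chebyshev polynomial $T_n$, since $\|T_n\|_{[-1,1]}=1$ while $|T_n'(\pm1)|=n^2$ --- so the pragmatic route is to invoke it directly from \cite{Rivlin74}. For a self-contained argument I would run the standard \emph{Chebyshev-comparison} proof: fix $x\in[-1,1]$ and let $t_k=\cos(k\pi/n)$, $k=0,\dots,n$, be the extrema of $T_n$. Since $\{t_k\}_{k=0}^n$ is unisolvent for the $(n+1)$-dimensional space $\mathbb{P}_n$ of polynomials of degree $\le n$, the linear functional $q\mapsto q'(x)$ on $\mathbb{P}_n$ admits a representation $q'(x)=\sum_{k=0}^n w_k(x)\,q(t_k)$ with weights $w_k(x)=\ell_k'(x)$, the derivatives of the Lagrange cardinal polynomials at the nodes $\{t_k\}$. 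This yields the crude bound $|q'(x)|\le\bigl(\sum_{k}|w_k(x)|\bigr)\,\|q\|_{[-1,1]}$ for every $q\in\mathbb{P}_n$.

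The crux is to evaluate $\sum_k|w_k(x)|$. Because $T_n(t_k)=(-1)^k$, applying the representation to $q=T_n$ gives $T_n'(x)=\sum_k(-1)^k w_k(x)$. I would then establish that the weights sign-alternate, i.e.\ $(-1)^k w_k(x)\ge0$ for all $k$ (equivalently, that $\pm T_n$ extremize $q\mapsto q'(x)$ over the unit ball of $\mathbb{P}_n$); this forces $\sum_k|w_k(x)|=|T_n'(x)|$ and hence $|q'(x)|\le|T_n'(x)|\,\|q\|_{[-1,1]}$. The argument is completed using $T_n'=n\,U_{n-1}$ and $|U_{n-1}|\le n$ on $[-1,1]$ with the maximum attained at $x=\pm1$, so that $\max_{[-1,1]}|T_n'|=n^2$. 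The sign-alternation of the quadrature weights is the step I expect to be the main obstacle: it is the genuine technical heart of Markov's theorem, and is exactly why it is customary to cite it here rather than reprove it.

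Finally I would record the consequence that Section~\ref{sec:lower} actually invokes. Rescaling Markov affinely from $[-1,1]$ to $[0,L]$ gives $\|\phi'\|_{[0,L]}\le\tfrac{2n^2}{L}\,\|\phi\|_{[0,L]}$ for every $\phi$ with $\deg\phi\le n$. For any admissible $p_t$ (namely $\deg p_t\le t$ and $p_t(0)=1$), put $\phi(\zeta):=\zeta\,p_t(\zeta)$, so that $\deg\phi\le t+1$ and $\phi'(0)=p_t(0)=1$; since $0\in[0,L]$,
\[
1=|\phi'(0)|\le\|\phi'\|_{[0,L]}\le\frac{2(t+1)^2}{L}\,\max_{\zeta\in[0,L]}\bigl|\zeta\,p_t(\zeta)\bigr|,
\]
whence $\max_{\zeta\in[0,L]}|\zeta\,p_t(\zeta)|\ge L/\bigl(2(t+1)^2\bigr)$. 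Applying this with $t=T$ and combining with the Krylov representation (Lemma~\ref{lem:krylov}) and the lifting of Theorem~\ref{thm:non-singleton} then produces the last-iterate bound $\cG(x^{(T)})\ge LR/\bigl(2(T+1)^2\bigr)$ of Theorem~\ref{thm:universal-lower}.
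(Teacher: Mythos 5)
Your pragmatic route coincides with the paper's: the lemma is simply invoked from Rivlin, with no proof given, and your rescaled consequence on $[0,L]$ --- applying the inequality to $\phi(\zeta)=\zeta\,p_t(\zeta)$, which has degree at most $t+1$ and $\phi'(0)=p_t(0)=1$ --- is exactly the computation of Appendix~\ref{app:markov}, only with the affine change of variables performed before rather than after differentiating. So the part of your proposal that the paper actually needs is correct and identical in substance to what is done there.

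The self-contained sketch, however, contains a genuine error: the claimed sign-alternation $(-1)^k w_k(x)\ge 0$ for \emph{every} fixed $x\in[-1,1]$, equivalently the pointwise domination $|q'(x)|\le |T_n'(x)|\,\|q\|_{[-1,1]}$, is false in the interior of the interval. For $n$ even one has $T_n'(0)=n\,U_{n-1}(0)=0$, while $q(x)=x$ satisfies $\|q\|_{[-1,1]}=1$ and $q'(0)=1$; more generally the comparison fails near every interior extremum of $T_n'$'s zero set, i.e.\ wherever $T_n'$ vanishes inside $(-1,1)$. The classical proof uses the Chebyshev comparison (and the attendant sign pattern of the $\ell_k'(x)$) only for $|x|\ge\cos\bigl(\pi/(2n)\bigr)$, beyond the outermost zero of $T_n$, and covers the interior with Bernstein's inequality $|q'(x)|\le n\,\|q\|_{[-1,1]}/\sqrt{1-x^2}$, which is at most $n^2\|q\|_{[-1,1]}$ there because $\sqrt{1-x^2}\ge\sin\bigl(\pi/(2n)\bigr)\ge 1/n$. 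Since you flagged this step as the obstacle and ultimately fall back on the citation --- which is all the paper does --- your overall plan stands, but the sketch as written could not be completed without this case split.
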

Let $S(\zeta) = \zeta p_t(\zeta)$, which is a polynomial of degree $t+1$. We know $S(0)=0$ and from $p_t(0)=1$, we have $S'(0) = p_t(0) + 0 \cdot p_t'(0) = 1$. Let's map the interval $[0,L]$ to $[-1,1]$ via the transformation $x=\frac{2\zeta}{L}-1$, which means $\zeta=\frac{L(x+1)}{2}$. Define a new polynomial $q(x) = S\left(\frac{L(x+1)}{2}\right)$. The degree of $q$ is $n=t+1$. At $x=-1$, $q(-1) = S(0)=0$.
The derivative of $q$ with respect to $x$ is $q'(x) = S'\left(\frac{L(x+1)}{2}\right) \cdot \frac{L}{2}$.
At $x=-1$, this gives $q'(-1) = S'(0) \cdot \frac{L}{2} = 1 \cdot \frac{L}{2} = \frac{L}{2}$.
By Markov's inequality:
\[
\frac{L}{2} = |q'(-1)| \le \max_{x\in[-1,1]}|q'(x)| \le (t+1)^2 \max_{x\in[-1,1]} |q(x)|.
\]
Rearranging gives the bound on the maximum of $q$, which is the same as the maximum of $S$:
\[
\max_{\zeta\in[0,L]} |\zeta p_t(\zeta)| = \max_{x\in[-1,1]} |q(x)| \ge \frac{L}{2(t+1)^2}.
\]

\subsection{Chebyshev Extremal Problem on \texorpdfstring{$[\mu,L]$}{[mu,L]}}\label{app:cheb-strong}
The lower bound for strongly convex methods requires solving
\[
\min_{p_t}\; \max_{\zeta\in[\mu,L]} \lvert p_t(\zeta)\rvert
\quad \text{s.t.}\quad \deg p_t \le t,\; p_t(0)=1 .
\]
The affine transformation $\xi = \frac{2\zeta - (L+\mu)}{L-\mu}$ maps the interval $\zeta \in [\mu, L]$ to $\xi \in [-1,1]$. The constraint $p_t(0)=1$ is evaluated at $\zeta=0$, which corresponds to $\xi_0 = -\frac{L+\mu}{L-\mu}$. Let $\tilde{p}_t(\xi) = p_t(\zeta(\xi))$. The problem becomes:
\[
\min_{\substack{\deg \tilde{p}_t \le t \\ \tilde{p}_t(\xi_0)=1}} \max_{\xi \in [-1, 1]} |\tilde{p}_t(\xi)|.
\]
The solution to this classic problem is the scaled Chebyshev polynomial $\tilde{p}_t^*(\xi) = \frac{T_t(\xi)}{T_t(\xi_0)}$, where $T_t$ is the Chebyshev polynomial of the first kind of degree $t$ (see \cite{Rivlin74}, Chapter 2, Section 2.7, Page 108). The extremal value is $\frac{1}{|T_t(\xi_0)|}$ because $\max_{\xi\in[-1,1]}|T_t(\xi)|=1$.
With $\kappa=L/\mu$, we have $|\xi_0|=\frac{L+\mu}{L-\mu}=\frac{\kappa+1}{\kappa-1} > 1$.
For $|x|>1$, the Chebyshev polynomial is given by $T_t(x) = \frac{1}{2}\left( (x+\sqrt{x^2-1})^t + (x-\sqrt{x^2-1})^{-t} \right)$.
Let $\rho = |\xi_0|+\sqrt{|\xi_0|^2-1}$. A direct calculation shows:
\begin{align*}
\rho 
  &= \frac{\kappa+1}{\kappa-1} 
     + \sqrt{\left(\frac{\kappa+1}{\kappa-1}\right)^2 - 1} \\[6pt]
  &= \frac{\kappa+1+2\sqrt{\kappa}}{\kappa-1} \\[6pt]
  &= \frac{(\sqrt{\kappa}+1)^2}{(\sqrt{\kappa}-1)(\sqrt{\kappa}+1)} \\[6pt]
  &= \frac{\sqrt{\kappa}+1}{\sqrt{\kappa}-1}.
\end{align*}
Then $|T_t(\xi_0)| = |T_t(|\xi_0|)| = \frac{1}{2}(\rho^t+\rho^{-t})$. The extremal value is $\frac{1}{|T_t(\xi_0)|} = \frac{2}{\rho^t+\rho^{-t}}$.

\bibliographystyle{plain}
\bibliography{refs}

\end{document}